\author{Dilip Raghavan}
\thanks{Part of this article was written when the first author was a visitor at the Institute of Mathematical Sciences, Chennai. He thanks them for their kind hospitality.}
\author{Juris Stepr{\=a}ns}
\thanks{Both authors partially supported by NSERC}
\date{\today}
\keywords{maximal almost disjoint family, cardinal invariants}
\title{On weakly tight families}
\def\polhk#1{\setbox0=\hbox{#1}{\ooalign{\hidewidth
    \lower1.5ex\hbox{`}\hidewidth\crcr\unhbox0}}}
\newtheorem{Theorem}{Theorem}
\newtheorem{Lemma}[Theorem]{Lemma}
\newtheorem{conj}[Theorem]{Conjecture}
\newtheorem{Question}[Theorem]{Question}
\theoremstyle{definition}
\newtheorem{Def}[Theorem]{Definition}
\theoremstyle{remark}
\renewcommand{\qedsymbol}{$\dashv$}
\newcommand{\forces}{\Vdash}
\newcommand{\restrict}{\upharpoonright}
\newcommand{\forallbutfin}{{\forall}^{\infty}}
\newcommand{\existsinf}{{\exists}^{\infty}}
\renewcommand{\c}{\mathfrak{c}}
\renewcommand{\b}{\mathfrak{b}}
\renewcommand{\d}{{\mathfrak{d}}}
\newcommand{\s}{\mathfrak{s}}
\renewcommand{\ae}{{\mathfrak{a}}_{\mathfrak{e}}}
\newcommand{\x}{{\mathfrak{x}}}
\newcommand{\y}{{\mathfrak{y}}}
\newcommand{\p}{{\mathfrak{p}}}
\renewcommand{\a}{{\mathfrak{a}}}
\renewcommand{\[}{\left[}
\renewcommand{\]}{\right]}
\renewcommand{\P}{\mathbb{P}}
\newcommand{\PP}{\mathcal{P}}
\newcommand{\lc}{\left|}
\newcommand{\rc}{\right|}
\newcommand\ZFC{\mathrm{ZFC}}
\newcommand\CH{\mathrm{CH}}
\newcommand{\BS}{{\omega}^{\omega}}
\DeclareMathOperator{\otp}{otp}
\DeclareMathOperator{\cov}{cov}
\DeclareMathOperator{\dom}{dom}
\DeclareMathOperator{\cf}{cf}
\newcommand{\Pset}{\mathcal{P}}
\newcommand{\M}{\mathcal{M}}
\newcommand{\B}{\mathscr{B}}
\newcommand{\A}{{\mathscr{A}}}
\newcommand{\C}{{\mathscr{C}}}
\newcommand{\GG}{{\mathcal{G}}}
\newcommand{\cube}{{\[\omega\]}^{\omega}}
\newcommand{\I}{{\mathcal{I}}}
\newcommand{\F}{{\mathcal{F}}}
\newcommand{\T}{{\mathcal{T}}}
\begin{document}
\begin{abstract}
	Using ideas from Shelah's recent proof that a completely separable maximal almost disjoint family exists when $\c < {\aleph}_{\omega}$, we construct a weakly tight family under the hypothesis $\s \leq \b < {\aleph}_{\omega}$. The case when $\s < \b$ is handled in $\ZFC$ and does not require $\b < {\aleph}_{\omega}$, while an additional PCF type hypothesis, which holds when $\b < {\aleph}_{\omega}$ is used to treat the case $\s = \b$. The notion of a weakly tight family is a natural weakening of the well studied notion of a Cohen indestructible maximal almost disjoint family. It was introduced by Hru{\v{s}}{\'a}k and Garc{\'{\i}}a Ferreira~\cite{Hr1}, who applied it to the Kat\'etov order on almost disjoint families.
\end{abstract}
\maketitle
\section{Introduction} \label{sec:intro}
Recall that two infinite subsets $a$ and $b$ of $\omega$ are said to be \emph{almost disjoint or a.d.\@} if $a \cap b$ is finite. We say that a family $\A \subset \cube$ is \emph{almost disjoint or a.d.\@} if its elements are pairwise a.d. A \emph{Maximal Almost Disjoint or MAD} family is an infinite a.d.\ family $\A \subset \cube$ such that $\forall b \in \cube \exists a \in \A \[\lc a \cap b \rc = \omega\]$.

	MAD families have been intensively studied in set theory. They have several applications in set theory as well as general topology. For instance, the technique of almost disjoint coding has been used in forcing theory (see \cite{L}) and MAD families are used in the construction of the Isbell-Mr{\'o}wka space in topology (see \cite{G}). See \cite{bulletin} for a general survey of some recent results and open problems regarding MAD families.

	Particular attention has been focused on the existence and properties of MAD families with strong combinatorial properties. These combinatorial properties typically require the family to be ``maximal'' with respect to some additional criteria besides the one defining MAD families. The most well known is that of a completely separable MAD family. Recall that a MAD family $\A \subset \cube$ is said to be \emph{completely separable} if for any $b \in {\I}^{+}(\A)$, there is an $a \in \A$ with $a \subset b$. Here $\I(\A)$ denotes the ideal on $\omega$ generated by $\A$, and for any ideal $\I$ on $\omega$, ${\I}^{+} = \Pset(\omega) \setminus \I$.

	In most cases, it is unknown whether MAD families with these strong combinatorial properties can be constructed in $\ZFC$. In fact, almost all known constructions of such families use an assumption of the form $\x = \c$, where $\x$ is some appropriately chosen cardinal invariant. Here, the assumption $\p = \c$ serves as a limiting case, sufficing for virtually all known constructions of this sort. Upto now, there have only been two such constructions that have not required an assumption of the form $\x = \c$, the example of completely separable families first considered in \cite{ersh}, and that of Van Douwen families posed in \cite{Mi2} (see below for a discussion of Sacks indestructible MAD families).

	Balcar, Do{\v{c}}k{\'a}lkov{\'a}, and Simon~\cite{BS} obtained the first major results here by proving that a completely separable MAD family can be constructed from any of the assumptions $\b = \d$, $\s = {\omega}_{1}$, or $\d \leq \a$. Then Shelah~\cite{Sh:935} recently achieved a breakthrough by constructing such a family from $\c < {\aleph}_{\omega}$.

	Recall that a an a.d.\ family of total functions $\A \subset \BS$ is said to be \emph{Van Douwen} if for each $p$, an infinite partial function from $\omega$ to $\omega$, there is $f \in \A$ such that $\lc p \cap f\rc = \omega$. While it is easy to construct a Van Douwen family from an assumption like $\ae = \c$, Raghavan~\cite{vmad} showed how to get such an object just in $\ZFC$ alone.

	Another prominent example of a strong combinatorial property which has been considered for a.d.\ families is that of indestructibility.
\begin{Def}
	Let $\P$ be a notion of forcing and let $\A \subset \cube$ be a MAD family. We will say that \emph{$\A$ is $\P$-indestructible} if $\; {\forces}_{\P}{\; \A \ \text{is MAD}}$.
\end{Def}
	There is no forcing notion $\P$ adding a new real for which a $\ZFC$ construction of a $\P$-indestructible MAD family is known. A Sacks indestructible MAD family is provably the weakest such object in the sense that if $\A \subset \cube$ is a MAD family that is $\P$-indestructible for some $\P$ which adds a new real, then $\A$ is also Sacks indestructible. It is not too hard to see that if $\a < \c$, then any MAD family of size $\a$ is Sacks indestructible. However, the only known constructions of a Sacks indestructible MAD family of size $\c$ use either $\b = \c$ or $\cov{(\M)} = \c$. It remains an open problem whether such families (of any size) can be built in $\ZFC$.
Another basic example of a $\P$ that adds a real is Cohen forcing. Cohen indestructibility is closely related to another combinatorial property of MAD families first considered by Malykhin~\cite{Mal}.
\begin{Def}
	An a.d.\ family $\A \subset \cube$ is called \emph{${\aleph}_{0}$-MAD} or \emph{tight} or \emph{strongly MAD} if for every countable collection $\{{b}_{n}: n \in \omega\} \subset {\I}^{+}(\A)$, there is $a \in \A$ such that $\forall n \in \omega\[\lc {b}_{n} \cap a \rc = \omega\]$.
\end{Def} 
	It is not too difficult to see that there is a Cohen indestructible MAD family iff an ${\aleph}_{0}$-MAD family exists. The only known construction of an ${\aleph}_{0}$-MAD family (of size $\c$) uses $\b = \c$, and it is a long standing open problem whether their existence can be proved in $\ZFC$. It is shown in \cite{svmad} that the weak Freese--Nation property of $\Pset(\omega)$ (wFN($\Pset(\omega)$)), which is shown to hold in \cite{FKS} in any model gotten by adding fewer than ${\aleph}_{\omega}$ Cohen reals to a ground model satisfying $\CH$, implies that all ${\aleph}_{0}$-MAD families have size at most $\aleph_1$. In particular, it is consistent that there are no ${\aleph}_{0}$-MAD families of size $\c$. ${\aleph}_{0}$-MAD families have been studied in \cite{Ku} and \cite{Hr2}. Also, Brendle and Yatabe ~\cite{BandY} have provided combinatorial characterizations of $\P$-indestructibility for many other standard posets $\P$.

	 Hru{\v{s}}{\'a}k and Garc{\'{\i}}a Ferreira~\cite{Hr1} introduced the following natural weakening of an ${\aleph}_{0}$-MAD family.
\begin{Def} \label{def:weakly}
	An a.d.\ family $\A \subset \cube$ is called \emph{weakly tight} if for every countable collection $\{{b}_{n}: n \in \omega\} \subset {\I}^{+}(\A)$, there is $a \in \A$ such that $\existsinf n \in \omega \[\lc {b}_{n} \cap a \rc = \omega \]$. 
\end{Def}
They proved that such families are almost maximal in the Kat\'etov order on a.d.\ families. Given a.d.\ families $\A$ and $\B$, we say that $\A$ is \emph{Kat\'etov below} $\B$ and write $\A \; {\leq}_{K} \; \B$ if there is a function $f \in \BS$ such that $\forall a \in \I(\A)\[{f}^{-1}(a) \in \I(\B)\]$. They showed that if $\A$ is weakly tight, then for any other MAD family $\B$, if $\A \; {\leq}_{K} \; \B$, then there is a $c \in {\I}^{+}(\A)$ such that $\B \; {\leq}_{K}\; \{a \cap c: a \in \A \}$. It is unknown whether it is consistent to have a MAD family that is Kat\'etov maximal. Till now, the only known construction of a weakly tight family was from $\b = \c$, and that construction does not distinguish them in any way from ${\aleph}_{0}$-MAD families. 

	In this paper, we prove that weakly tight families exist when $\s < \b$, and that they also exist when $\s = \b$ provided that a certain PCF type hypothesis holds. By a PCF type hypothesis, we mean a hypothesis about $\cf{(\langle {\[\kappa\]}^{\omega}, \subset \rangle)}$ for some cardinal $\kappa$. Such hypotheses typically hold below ${\aleph}_{\omega}$. Our construction is a modification of Shelah \cite{Sh:935}, which in turn is a modification of the classic constructions of Balcar, Do{\v{c}}k{\'a}lkov{\'a}, and Simon~\cite{BS}. Shelah~\cite{Sh:935} shows that there is a completely separable MAD family in any of the following three situations -- case 1: $\s < \a$; case 2: $\s = \a$ and a certain PCF type hypothesis holds; case 3: $\a < \s$ plus a stronger PCF type assumption. Therefore, we prove the exact analogues of Shelah's cases 1 and 2 for weakly tight families, except that we compare $\s$ to $\b$ instead of $\a$. However, we cannot prove the analogue of case 3, and we conjecture that it cannot be done (see Conjecture \ref{c:noweaklytight}).

	However, our approach is somewhat different from \cite{Sh:935}. We first introduce a new cardinal invariant ${\s}_{\omega, \omega}$, and prove outright in $\ZFC$ that a weakly tight family exists if ${\s}_{\omega, \omega} \leq \b$.
\begin{Def}\label{def:somegaomega}
	${\s}_{\omega, \omega}$ is the least $\kappa$ such that there is a family $\{{e}_{\alpha}: \alpha < \kappa\} \subset \cube$ such that for any collection $\{{b}_{n}: n \in \omega \} \subset \cube$, there exists $\alpha < \kappa$ such that $\existsinf n \in \omega \[\lc {b}_{n} \cap {e}_{\alpha} \rc = \omega\]$ and $\existsinf n \in \omega \[ \lc {b}_{n} \cap {\bar{e}}_{\alpha} \rc = \omega \]$.  
\end{Def}
${\s}_{\omega, \omega}$ is a minor variation of $\s$ and it is equal to $\s$ in all models we know of (see Question \ref{q:sww=s}). An advantage of our approach is that it shows that the PCF hypothesis can be eliminated from case 2 so long as one is willing to replace $\s$ with ${\s}_{\omega, \omega}$. Indeed, our proof shows that this will also work for completely separable MAD families -- i.e.\ we can prove (in ZFC) that they exist under ${\s}_{\omega, \omega} \leq \a$. Also, it is easy to show, by the same argument as for $\s$, that ${\s}_{\omega, \omega} \leq \d$. So, as a corollary, we get in $\ZFC$ that weakly tight families exist under $\b = \d$. We don't know if $\b = \d$ yields the PCF assumption $P(\b)$ (see Definition \ref{def:uk}) used in the proof of the $\s = \b$ case. In Section \ref{sec:main2} we first show in $\ZFC$ that $\s = {\s}_{\omega, \omega}$ when $\s < \b$, thus getting case 1 as a corollary. Then we show that if $\kappa = \s = \b$ and $P(\kappa)$ holds, then a weakly tight family can be constructed. Here $P(\kappa)$ is our PCF type hypothesis, and it appears to be slightly stronger than the one used by Shelah for his case 2. $P(\kappa)$ is always true for $\kappa < {\aleph}_{\omega}$, so we get weakly tight families when $\s = {\omega}_{1}$. It is of some interest that we now get weakly tight families in two of the three cases (i.e.\ $\b = \d$ and $\s = {\omega}_{1}$) in which Balcar, Do{\v{c}}k{\'a}lkov{\'a}, and Simon~\cite{BS} had previously gotten completely separable MAD families.

It is also worth pointing out here that one cannot construct an ${\aleph}_{0}$-MAD family of size $\c$ from $\s \leq \b < {\aleph}_{\omega}$ because of the previously mentioned result that wFN($\Pset(\omega)$) implies there are no ${\aleph}_{0}$-MAD families of size $\c$. In particular, in the Cohen model there is a weakly tight MAD family of size $\c$, but no ${\aleph}_{0}$-MAD families of that size.

We now make some general remarks on the basic method. Suppose $\kappa = \s$. First each node $\eta$ of ${2}^{< \kappa}$ is labelled with a subset of $\omega$, say ${e}_{\eta}$. Each member of the a.d.\ family under construction is ``associated'' with a node, and the idea is that whenever two sets are associated with incomparable nodes, they are automatically a.d. This is ensured by specifying at each node of ${2}^{< \kappa}$ a collection of subsets of $\omega$ that are ``allowed'' to be associated with that node. Then most of the argument goes into showing that at any stage $\alpha < \c$ there is a prefect set of nodes with which ${a}_{\alpha}$ is allowed to be associated. Here ${a}_{\alpha}$ is the member of the a.d.\ family constructed at stage $\alpha$. This means that ${a}_{\alpha}$ can be associated with a node that is incomparable with ``most'' (all but fewer than $\s$) of the nodes with which some ${a}_{\beta}$ has already been associated. So ${a}_{\alpha}$ will be automatically a.d.\ from most of the previous ${a}_{\beta}$.

For constructing a completely separable MAD family, we can simply require that a set $a$ is allowed to be associated with a node $\eta$ iff for each node $\tau \subsetneq \eta$, $a$ is either almost included in ${e}_{\tau}$ or almost disjoint from ${e}_{\tau}$ depending on which way $\eta$ went at $\dom{(\tau)}$. However, this requirement is too strong for building a weakly tight family. Recall that a \emph{partitioner} of an a.d.\ family $\A$ is a set $b \in {\I}^{+}(\A)$ with the property that $\forall a \in \A \[a \; {\subset}^{\ast} \; b \vee \lc a \cap b \rc < \omega\]$. It is clear that any $\A$ that is subject to the above mentioned constraint will have an infinite pairwise disjoint family of partitioners. However, such an $\A$ must necessarily fail to be weakly tight. We deal with this using two innovations. Firstly, each member of the a.d.\ family will be associated with a countable collection of nodes, instead of one single node, and will be the union of a countable sequence of infinite subsets of $\omega$. Secondly, each such countable sequence will be associated with its own node, and the collection ${\I}_{\eta}$ of countable sequences allowable at a node $\eta$ will be defined so as to ensure almost disjointness (see Definition \ref{def:Ieta}).

We believe that these adaptations we have introduced for building a weakly tight family will be of use in getting other kinds of MAD families with few partitioners (see Conjecture \ref{c:sacks}) by helping us to replace assumptions of the form $\x = \c$ with weaker hypotheses of the form $\x \leq \y$. Eventually they should either show us how to do a $\ZFC$ construction or tell us where to look for a consistency proof.   
\section{The main construction} \label{sec:main}
In this section we give the PCF free construction of a weakly tight family.
\begin{Theorem} \label{thm:main}
If ${\s}_{\omega, \omega} \leq \b$, then there is a weakly tight family of size $\c$. In particular, such families exist if $\b = \d$, or if ${\s}_{\omega, \omega} = {\omega}_{1}$.
\end{Theorem}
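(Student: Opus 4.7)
The plan is to carry out a transfinite construction of length $\c$ along a labelled binary tree ${2}^{<\kappa}$, following the framework sketched informally just before Section \ref{sec:main}. Set $\kappa = {\s}_{\omega,\omega}$ and fix a family $\langle {e}_{\gamma} : \gamma < \kappa \rangle \subseteq \cube$ witnessing Definition \ref{def:somegaomega}. Label each node $\eta \in {2}^{<\kappa}$ with ${e}_{\eta} = {\bigcap}_{\gamma \in \dom{\eta}} {({e}_{\gamma})}^{\eta(\gamma)}$, where ${({e}_{\gamma})}^{0} = {e}_{\gamma}$ and ${({e}_{\gamma})}^{1} = \omega \setminus {e}_{\gamma}$, discarding nodes on which ${e}_{\eta}$ is finite. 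For each node $\eta$, let ${\I}_{\eta}$ denote the collection of countable sequences ${({a}_{n})}_{n < \omega}$ of infinite subsets of $\omega$ that are compatible with $\eta$, in the sense that for every proper initial segment $\tau \subsetneq \eta$, all but finitely many ${a}_{n}$ are almost included in ${({e}_{\dom{\tau}})}^{\eta(\dom{\tau})}$. The key structural property is that if $\eta$ and $\eta'$ are incomparable and ${({a}_{n})} \in {\I}_{\eta}$, ${({a'}_{n})} \in {\I}_{\eta'}$, then the unions ${\bigcup}_{n} {a}_{n}$ and ${\bigcup}_{n} {a'}_{n}$ are almost disjoint.

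Enumerate in a single list of length $\c$ all pairs $({\vec{b}}^{\alpha}, {c}^{\alpha})$, where ${\vec{b}}^{\alpha} = {({b}_{n}^{\alpha})}_{n}$ ranges over countable subsets of $\cube$ and ${c}^{\alpha} \in \cube$, so that every countable collection and every single set appears cofinally often. The recursion produces, at stage $\alpha < \c$, a node ${\eta}^{\alpha} \in {2}^{<\kappa}$ and a sequence ${({a}_{n}^{\alpha})}_{n} \in {\I}_{{\eta}^{\alpha}}$, setting ${a}_{\alpha} = {\bigcup}_{n} {a}_{n}^{\alpha}$, such that: (i) ${\eta}^{\alpha}$ is incomparable with all but fewer than $\kappa$ of the previously chosen ${\eta}^{\beta}$, $\beta < \alpha$; (ii) ${a}_{\alpha}$ is almost disjoint from each ${a}_{\beta}$, $\beta < \alpha$; (iii) if ${c}^{\alpha} \notin \I({\A}_{\alpha})$, then $\lc {a}_{\alpha} \cap {c}^{\alpha} \rc = \omega$; and (iv) if ${b}_{n}^{\alpha} \in {\I}^{+}({\A}_{\alpha})$ for every $n$, then $\existsinf n \; \lc {a}_{\alpha} \cap {b}_{n}^{\alpha}\rc = \omega$, where ${\A}_{\alpha} = \{{a}_{\beta} : \beta < \alpha\}$.

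The crucial step, which I expect to be the main obstacle, is establishing that at stage $\alpha$ the set of nodes $\eta$ admitting a sequence in ${\I}_{\eta}$ satisfying (ii), (iii), and (iv) is rich enough — essentially a perfect-like subtree of ${2}^{<\kappa}$ — that (i) can also be arranged, since $|\alpha| < \c$. This is precisely where the hypothesis ${\s}_{\omega,\omega} \leq \b$ enters. Applying the witnessing family $\langle {e}_{\gamma}\rangle$ to appropriate countable subcollections extracted from ${({b}_{n}^{\alpha})}_{n}$ (after intersecting with each ${e}_{\tau}$ along the current branch) locates coordinates $\gamma < \kappa$ at which both ${e}_{\gamma}$ and $\omega \setminus {e}_{\gamma}$ meet infinitely many of the relevant ${b}_{n}^{\alpha}$ infinitely; this freedom is what enables us to simultaneously choose the value of ${\eta}^{\alpha}(\gamma)$ while preserving the weak tightness demand on either side. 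Concurrently, while extending ${\eta}^{\alpha}$ and selecting the ${a}_{n}^{\alpha}$, one must diagonalize against the countable pool of constraints coming from the earlier ${a}_{\beta}$ attached to comparable nodes together with the ${b}_{n}^{\alpha}$; here $\kappa \leq \b$ is used to bound the countable families of functions in $\BS$ that control these constraints, ensuring that the diagonalization can be completed cofinally below $\kappa$.

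Once the recursion succeeds, $\A = \{{a}_{\alpha} : \alpha < \c\}$ is almost disjoint by (ii), has size $\c$ by (i) combined with the branching of ${2}^{<\kappa}$, is MAD by (iii) (any $b \in \cube$ either enters some $\I({\A}_{\alpha})$, witnessing maximality, or else occurs as ${c}^{\alpha}$ with ${c}^{\alpha} \notin \I({\A}_{\alpha})$, triggering (iii)), and is weakly tight by (iv) applied to the cofinally many $\alpha$ at which the countable demand remains in ${\I}^{+}({\A}_{\alpha})$. The ``in particular'' clauses follow from the standard inequality ${\s}_{\omega,\omega} \leq \d$ (proved exactly as for $\s$) and from ${\omega}_{1} \leq \b$.
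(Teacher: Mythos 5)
Your central structural claim --- that if $\eta$ and $\eta'$ are incomparable and $(a_n) \in \I_\eta$, $(a'_n) \in \I_{\eta'}$, then $\bigcup_n a_n$ and $\bigcup_n a'_n$ are almost disjoint --- is false, and the proposal has no mechanism to repair it. If $\eta$ and $\eta'$ first disagree at $\delta$, the definition of $\I_\eta$ only controls all but finitely many of the $a_n$: there are finitely many ``head'' terms $a_0, \dots, a_{k-1}$ about which $\I_\eta$ says nothing at level $\delta$, and any one of these can intersect $\bigcup_n a'_n$ infinitely. (A second, smaller issue: you wrote ``almost included,'' i.e.\ ${\subset}^{\ast}$, but then even the tail union $\bigcup_{n \geq k} a_n$ can meet the other side infinitely, since a union of countably many finite sets need not be finite; the paper's Definition \ref{def:Ieta} uses genuine inclusion $\vec{C}(n) \subset e^{\eta(\beta)}_{\beta}$ for exactly this reason.)

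This is precisely the point the paper flags as its second main innovation. The paper associates to each $a_\beta = \bigcup_n d^\beta_n$ \emph{both} a node $\eta(a_\beta)$ for the whole column sequence satisfying $({\dagger}_{a_\beta})$, i.e.\ $\vec{D}^\beta \in \I_{\eta(a_\beta)}$, \emph{and} a separate node $\eta(d^\beta_n)$ for each individual column satisfying the stronger pointwise condition $({\dagger}_{d^\beta_n})$: $d^\beta_n {\subset}^{\ast} e^{\eta(d^\beta_n)(\gamma)}_\gamma$ for \emph{every} $\gamma < \dom(\eta(d^\beta_n))$. The a.d.\ verification in Lemma \ref{lem:existsperfecttree} and in the proof of Theorem \ref{thm:main} then splits into two cases: the cofinite tail $\bigcup_{n \geq k} d^\beta_n$ is handled via the disagreement of $\tau$ with $\eta(a_\beta)$, while each of the finitely many head columns $d^\beta_n$, $n < k$, is killed off using the disagreement of $\tau$ with the per-column node $\eta(d^\beta_n)$. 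Without the per-column nodes (which in your framework would be the nodes $\tau_n$ produced by Lemma \ref{lem:existsperfecttree} and then recorded in the tree $\T_\alpha$), there is no way to control those head columns, and the a.d.\ claim collapses. Your outline of the splitting/fusion argument using $\s_{\omega,\omega}$ and $\b$ is otherwise in the right spirit, but the missing per-column bookkeeping is a genuine gap, not a detail.
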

Both the construction given here and the one in Section \ref{sec:main2} are very similar, and we could have presented a single, unified construction, and then derived the two results as corollaries. However, we have chosen to separate them because we feel that the construction presented in this section is the easiest one to follow, and a reader who has understood it should have no difficulty in assimilating the modifications made to it in Section \ref{sec:main2}. We first fix some notation.

	For any $e \subset \omega$, we use $\bar{e}$ to denote $\omega \setminus e$. We will also often use ${e}^{0}$ to denote $e$ and ${e}^{1}$ to denote $\bar{e}$. Next, we give the definition of ${\I}_{\eta}$, which should be thought of as the collection of sequences of sets that are allowable at $\eta$.
\begin{Def} \label{def:Ieta}
	We say that a sequence $\vec{C} = \langle {c}_{n}: n \in \omega \rangle \subset \cube$ is a \emph{sequence of columns} if for any $n \neq m$, ${c}_{n} \cap {c}_{m} = 0$. Define 
	\begin{align*}
		\C = \left\{\vec{C}: \vec{C} \ \text{is a sequence of columns}\right\}.
	\end{align*}
	Let $\kappa$ be an infinite cardinal, and let $\langle {e}_{\alpha}: \alpha < \kappa \rangle \subset \cube$. For an $\eta \in {2}^{\leq \kappa}$, we define
	\begin{align*}
		{\I}_{\eta}(\langle {e}_{\alpha}: \alpha < \kappa \rangle) = \left\{\vec{C} \in \C: \forall \beta < \dom{(\eta)} \forallbutfin n \in \omega \[\vec{C}(n) \subset {e}^{\eta(\beta)}_{\beta}\] \right\}.
	\end{align*}
We will often omit the $\langle {e}_{\alpha}: \alpha < \kappa \rangle$ because it will be clear from the context.
\end{Def} 
\begin{Lemma} \label{lem:positivesplitting}
Let $\langle {e}_{\alpha}: \alpha < \kappa \rangle$ witness $\kappa = {\s}_{\omega, \omega}$. Let $\A \subset \cube$ be any a.d.\ family. Then for each $b \in {\I}^{+}(\A)$, there is an $\alpha < \kappa$ such that $b \cap {e}_{\alpha} \in {\I}^{+}(\A)$ and $b \cap {\bar{e}}_{\alpha} \in {\I}^{+}(\A)$.  
\end{Lemma}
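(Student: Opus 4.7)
The plan is to reduce the lemma to a direct application of Definition~\ref{def:somegaomega} by producing, from $b$ and $\A$, a suitable countable family of infinite sets to which the splitting family $\langle e_{\alpha} : \alpha < \kappa \rangle$ can be applied. The key property I want this countable family $\{b_n : n \in \omega\} \subset \cube$ to have is that whenever $X \subset \omega$ satisfies $\existsinf n\,\lc b_n \cap X \rc = \omega$, then $b \cap X \in \I^+(\A)$. Once such a family is constructed, applying Definition~\ref{def:somegaomega} yields an $\alpha < \kappa$ for which both $X = e_\alpha$ and $X = \bar{e}_\alpha$ pass this test simultaneously.

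To build $\{b_n\}$ I split into two cases. \textbf{Case 1:} infinitely many $a \in \A$ satisfy $\lc a \cap b \rc = \omega$. Enumerate such sets as $\{a_n : n \in \omega\}$ and set $b_n = b \cap a_n \in \cube$. \textbf{Case 2:} only finitely many $a \in \A$, say $a_0,\ldots,a_k$, meet $b$ infinitely. Put $b' = b \setminus (a_0 \cup \cdots \cup a_k)$; then $b'$ is infinite (otherwise $b \in \I(\A)$, contradicting $b \in \I^+(\A)$) and $b' \cap a$ is finite for every $a \in \A$. Partition $b'$ into infinitely many infinite pieces $\{b_n : n \in \omega\}$.

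The verification of the key property in each case is routine. In Case~1, if $b \cap e_\alpha \in \I(\A)$, say $b \cap e_\alpha \subset^* a'_0 \cup \cdots \cup a'_m$ with $a'_j \in \A$, then for any $n$ with $\lc b_n \cap e_\alpha\rc = \omega$ and $a_n \notin \{a'_0,\ldots,a'_m\}$ (still infinitely many such $n$), $b_n \cap e_\alpha \subset a_n$ would be almost contained in $a_n \cap (a'_0 \cup \cdots \cup a'_m)$, a finite set — a contradiction. In Case~2, if $\existsinf n \, \lc b_n \cap e_\alpha\rc = \omega$, then $b' \cap e_\alpha$ is infinite and almost disjoint from every member of $\A$, so it is in $\I^+(\A)$ (infinite sets almost disjoint from all of $\A$ cannot be almost covered by any finite union from $\A$), whence $b \cap e_\alpha \in \I^+(\A)$ by upward closure of $\I^+(\A)$. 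The same argument works with $\bar{e}_\alpha$ in place of $e_\alpha$.

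There is no real obstacle here; the main subtlety is simply recognizing the right case split based on how many members of $\A$ are hit infinitely by $b$, so that in each case the countable family $\{b_n\}$ accurately ``captures'' the positivity of $b$ relative to $\I(\A)$. The rest is just unwinding the definition of $\mathfrak{s}_{\omega,\omega}$.
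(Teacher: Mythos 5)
Your proof is correct and takes essentially the same approach as the paper: the same case split on whether $b$ meets infinitely or only finitely many members of $\A$ infinitely, with $b_n = b \cap a_n$ in the first case and a positive set almost disjoint from all of $\A$ in the second. The only cosmetic difference is in the finite case, where the paper applies the $\s_{\omega,\omega}$ family to the single set $c$ (using only the splitting property), whereas you partition $b'$ into infinitely many infinite pieces before applying the definition; both work.
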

\begin{proof}
	There are two cases to consider. Suppose first that there are only finitely many $a \in \A$ with $\lc b \cap a\rc = \omega$. Then since $b \in {\I}^{+}(\A)$, there is a $c \in {\[b\]}^{\omega}$ which is a.d.\ from every member of $\A$. Now, choose $\alpha < \kappa$ such that $\lc c \cap {e}_{\alpha}\rc = \lc c \cap {\bar{e}}_{\alpha} \rc = \omega$. It is clear that this $\alpha$ is as required.

	Next, suppose that there is an infinite collection $\{{a}_{n}: n \in \omega\} \subset \A$ with $\lc b \cap {a}_{n}\rc = \omega$ for each $n \in \omega$. Put ${c}_{n} =  b \cap {a}_{n}$ and choose $\alpha < \kappa$ such that $\existsinf n \in \omega \[ \lc {c}_{n} \cap {e}_{\alpha} \rc = \omega\]$ and $\existsinf n \in \omega \[\lc {c}_{n} \cap {\bar{e}}_{\alpha} \rc = \omega\]$. Now, both $b \cap {e}_{\alpha}$ and $b \cap {\bar{e}}_{\alpha}$ are in ${\I}^{+}(\A)$ because they both have infinite intersection with infinitely many members of $\A$. 
\end{proof}
Fix a sequence $\langle {e}_{\alpha}: \alpha < \kappa \rangle$ witnessing $\kappa = {\s}_{\omega, \omega}$. We will construct an increasing sequence of subtrees of ${2}^{< \kappa}$ by induction on $\c$. The weakly tight family $\A \subset \cube$ will be constructed along with these subtrees. At a stage $\alpha < \c$, we are given an increasing sequence $\langle {\T}_{\beta}: \beta < \alpha \rangle$ of subtrees of ${2}^{< \kappa}$ such that $\lc {\T}_{\beta} \rc \leq \lc \beta \rc + \omega$ for each $\beta < \alpha$, as well as an almost disjoint family $\{{a}_{\beta}: \beta < \alpha\}$. Thus ${\T}^{\alpha} = {\bigcup}_{\beta < \alpha}{{\T}_{\beta}}$ is a subtree of ${2}^{< \kappa}$ with $\lc {\T}^{\alpha} \rc < \c$. Now, we ensure that for each $\beta < \alpha$, ${a}_{\beta} = {\bigcup}_{n \in \omega}{{d}^{\beta}_{n}}$, where ${\vec{D}}^{\beta} = \langle {d}^{\beta}_{n}: n \in \omega\rangle$ is a sequence of columns. Moreover, to each ${a}_{\beta}$ and each ${d}^{\beta}_{n}$, we associate nodes $\eta({a}_{\beta}) \in {\T}_{\beta}$ and $\eta({d}^{\beta}_{n}) \in {\T}_{\beta}$ in such a way that the following conditions are satisfied:
    \begin{align*}
	& {\vec{D}}^{\beta} \in {\I}_{\eta({a}_{\beta})}  \tag{${\dagger}_{{a}_{\beta}}$}\\ 
   	& \forall \gamma < \dom{(\eta({d}^{\beta}_{n}))} \[{d}^{\beta}_{n} {\subset}^{\ast} {e}^{\eta({d}^{\beta}_{n})(\gamma)}_{\gamma}\] \tag{${\dagger}_{{d}^{\beta}_{n}}$}
    \end{align*}
It will also be important that $\eta({a}_{\beta}) \neq \eta({a}_{\gamma})$ for all $\gamma < \beta < \alpha$, that $\eta({d}^{\beta}_{n}) \neq \eta({d}^{\gamma}_{m})$ for all $\langle \beta, n \rangle \neq \langle \gamma, m \rangle$ where $\beta, \gamma < \alpha$, and $n,m \in \omega$, and also that $\eta({a}_{\beta}) \neq \eta({d}^{\gamma}_{m})$ for all $\beta, \gamma < \alpha$, and $m \in \omega$.
The next lemma says that at each stage $\alpha < \c$, it is not the case that $\{{a}_{\beta}: \beta < \alpha\}$ is already a MAD family ``somewhere'' -- i.e.\ there is no positive set on which this family is already MAD. Having this be the case is, of course, essential if we are to meet all our $\c$ many requirements. This lemma is already sufficient for constructing a completely separable MAD family from ${\s}_{\omega, \omega} \leq \b$. For a weakly tight family, we need an analogue of this for sequences of columns (Lemma \ref{lem:Ietasplitting}).
\begin{Lemma} \label{lem:existsperfecttree}
Let $b \in {\I}^{+}(\{{a}_{\beta}: \beta < \alpha\})$. Let ${\T}^{\alpha} \subset \T$ be a subtree of ${2}^{< \kappa}$ with $\lc \T \rc < \c$. There is a $c \in {\[b\]}^{\omega}$ which is a.d.\ from ${a}_{\beta}$ for every $\beta < \alpha$, and a $\tau \in \left( {2}^{< \kappa} \right) \setminus \T$ such that $\forall \delta < \dom{(\tau)}\[c \; {\subset}^{\ast} \; {e}^{\tau(\delta)}_{\delta}\]$ 
\end{Lemma}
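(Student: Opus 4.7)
My plan is to prove Lemma \ref{lem:existsperfecttree} via a transfinite splitting-tree construction along the witnessing sequence $\langle e_\alpha : \alpha < \kappa \rangle$, reminiscent of the completely separable MAD family constructions of Balcar--Do{\v{c}}k{\'a}lkov{\'a}--Simon and Shelah. Using Lemma \ref{lem:positivesplitting} as the splitting engine and the inequality $\kappa \leq \b$ to handle limits, I will build a $\subseteq^*$-decreasing tower $\langle c_\delta : \delta \leq \gamma \rangle$ of $\I^+(\A)$-positive subsets of $b$, together with a compatible function $\tau : \gamma \to 2$, such that $c_\delta \subseteq^* e_\xi^{\tau(\xi)}$ for every $\xi < \delta$. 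The desired $c$ will be a pseudo-intersection of the tower, and $\tau$ will be picked so as to lie outside $\T$ and to be incomparable in $2^{<\kappa}$ with each of the fewer-than-$\c$ nodes $\eta(a_\beta)$ and $\eta(d_n^\beta)$ labelling previously constructed objects.

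The recursion proceeds as follows. At a successor stage $\delta$, if $e_\delta$ splits $c_\delta$ into two $\I^+(\A)$-halves, I record a genuine binary choice for $\tau(\delta)$ and set $c_{\delta+1}$ to the chosen half; otherwise $\tau(\delta)$ is forced to the positive side. Lemma \ref{lem:positivesplitting} guarantees that for any $\I^+(\A)$-set some $e_\alpha$ does split it, and by interleaving the recursion with such splitting levels one arranges that at least countably many genuine choice stages occur by the time the recursion terminates at some $\gamma < \kappa$. At a limit $\delta < \kappa \leq \b$, the pseudo-intersection of $\langle c_\xi : \xi < \delta \rangle$ exists, stays in $\I^+(\A)$, and automatically sits $\subseteq^* e_\xi^{\tau(\xi)}$ for every $\xi < \delta$.

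Once enough binary freedom has accumulated, I pick $\tau$ so that $\tau \notin \T$ and $\tau$ is incomparable with every $\eta(a_\beta)$ and every $\eta(d_n^\beta)$. This is a counting argument: $\omega$ independent binary choices give $2^\omega = \c$ possible branches, while the union of $\T$ and the predecessors and extensions of the relevant $\eta$-nodes has size $<\c$, leaving most branches available. For the resulting $\tau$, almost disjointness of $c$ from each $a_\beta$ follows from the standard disagreement argument: at a level $\gamma_0 < \dom(\tau)$ where $\tau(\gamma_0) \neq \eta(a_\beta)(\gamma_0)$, invariant $(\dagger_{a_\beta})$ forces $d_n^\beta \subseteq e_{\gamma_0}^{\eta(a_\beta)(\gamma_0)}$ for cofinitely many $n$, so $c \cap d_n^\beta$ is finite for those $n$ because $c \subseteq^* e_{\gamma_0}^{\tau(\gamma_0)}$ sits on the opposite side; the finitely many exceptional $n$ are handled by invariant $(\dagger_{d_n^\beta})$ and the incomparability of $\tau$ with $\eta(d_n^\beta)$.

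The principal obstacle I anticipate is verifying that the splitting recursion really does supply enough genuine choice stages to support the counting argument. This amounts to a tower-level reformulation of the $\s_{\omega,\omega}$-splitting property underlying Lemma \ref{lem:positivesplitting}, and its verification will require some care in how the tower $\langle c_\delta \rangle$ and the splitting levels $\alpha_i < \kappa$ are interleaved so that $e_\delta$ actually splits $c_\delta$ at cofinally many $\delta < \gamma$.
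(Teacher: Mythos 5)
Your outline has two related problems, both traceable to the difference between a single ordinal-length tower and the perfect binary tree that the paper actually builds. First, the limit stages. You assert that at each limit $\delta < \kappa \leq \b$ the pseudo-intersection of $\langle c_\xi : \xi < \delta\rangle$ exists, is $\I^+(\A)$-positive, and satisfies the coherence conditions ``automatically.'' But $\mathfrak{t}$ may be strictly below $\b$, and a ${\subset}^{\ast}$-decreasing sequence of length at least $\mathfrak{t}$ need not have any pseudo-intersection at all, let alone a positive one; so $\kappa \leq \b$ does not handle these limits. The paper avoids the problem entirely: the tree $\{\sigma_s : s \in 2^{<\omega}\}$ is indexed by finite binary strings, so each branch $f \in 2^\omega$ produces a $\subset$-decreasing sequence $\langle b \cap e(f\restrict n) : n \in \omega\rangle$ that is genuinely countable, for which a positive pseudo-intersection $b_0$ always exists. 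The domain $\gamma = \dom(\tau) = \sup_n \gamma_{f\restrict n}$ is then a limit of cofinality $\omega$ by construction, and the paper closes with a separate case analysis on $\cf(\s_{\omega,\omega})$ to ensure $\gamma < \kappa$. There is also a bookkeeping incoherence in your description: a single tower commits to one side of $e_\delta$ at every splitting stage, fixing $\tau$ as it goes, yet you then want to pick $\tau$ freely from $2^\omega$ many continuations afterwards; retaining that freedom is precisely what requires a perfect tree rather than one branch.

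Second, you plan to pick $\tau$ incomparable with every $\eta(a_\beta)$ and every $\eta(d_n^\beta)$ and let the disagreement argument finish. This cannot work: nothing forbids one of those nodes from being an initial segment of the stem of your tree, in which case it lies below every candidate $\tau$. The paper does not try to avoid such nodes. Instead it forms the set $\GG$ of those $a_\beta$ with $\eta(a_\beta) \subset \tau$ or with some $\eta(d^\beta_n) \subset \tau$, shows $\lc \GG \rc \leq \lc \gamma \rc$, separately gathers finite covers $\F_\delta \subset \A_\alpha$ of the non-positive pieces $b_0 \cap e_\delta^{1-\tau(\delta)}$ into a family $\F$ with $\lc \F \rc \leq \lc \gamma \rc$, and then uses $\lc \gamma \rc < \s_{\omega,\omega} \leq \b \leq \a$ to find $c \in [b_0]^\omega$ a.d.\ from $\F \cup \GG$. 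The inclusions $c \; {\subset}^{\ast} \; e_\delta^{\tau(\delta)}$ come from $c$ being a.d.\ from $\F$, not from pseudo-intersecting a tower; and the $a_\beta$ outside $\GG$ are precisely those your disagreement argument handles. Your plan omits both the $\GG$-step and the $\F$-step, and that is exactly where the hypothesis ${\s}_{\omega,\omega} \leq \b$ (through $\a$) is really used in this lemma.
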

\begin{proof}
	Put ${\A}_{\alpha} = \{{a}_{\beta}: \beta < \alpha\}$. Build a prefect subtree $\PP = \{{\sigma}_{s}: s \in {2}^{< \omega}\}$ of ${2}^{< \kappa}$ as follows. To obtain ${\sigma}_{0}$ apply Lemma \ref{lem:positivesplitting} to find the least ${\gamma}_{0} < \kappa$ such that both $b \cap{e}_{{\gamma}_{0}}$ and $b \cap {\bar{e}}_{{\gamma}_{0}}$ are in ${\I}^{+}{({\A}_{\alpha})}$. It follows that for each $\delta < {\gamma}_{0}$, there is a unique $i \in 2$ such that $b \cap {e}^{i}_{\delta} \in {\I}^{+}{({\A}_{\alpha})}$. Define ${\sigma}_{0}: {\gamma}_{0} \rightarrow 2$ by ${\sigma}_{0}(\delta) = i$ iff $b \cap {e}^{i}_{\delta} \in {\I}^{+}{({\A}_{\alpha})}$ for each $\delta < {\gamma}_{0}$. Now, suppose $\{{\sigma}_{s}: s \in {2}^{\leq n}\} \subset {2}^{< \kappa}$ and $\{{\gamma}_{s}: s \in {2}^{\leq n}\} \subset \kappa$ have been constructed. For $s \in {2}^{\leq n + 1}$, define $e(s)$ as follows. Let $e(0)$ denote $\omega$. Given $e(s)$ for $s \in {2}^{\leq n}$, let $e({s}^{\frown}{\langle i \rangle}) = e(s) \cap {e}^{i}_{{\gamma}_{s}}$. Note that $e({s}^{\frown}{\langle i \rangle}) \subset e(s)$. Now, assume that the following properties hold:
	\begin{enumerate}
		\item
			$\forall s \in {2}^{\leq n}\[\dom{({\sigma}_{s})} =  {\gamma}_{s}\]$ and $\forall s \in {2}^{< n}\[{\sigma}_{{s}^{\frown}{\langle i \rangle}} \supset {{\sigma}_{s}}^{\frown}{\langle i \rangle}\]$
		\item
			$\forall s \in {2}^{\leq n}\[b \cap e({s}^{\frown}{\langle 0 \rangle}) \in {\I}^{+}{({\A}_{\alpha})} \ \text{and} \ b \cap e({s}^{\frown}{\langle 1 \rangle}) \in {\I}^{+}{({\A}_{\alpha})}\]$
		\item
			$\forall s \in {2}^{\leq n} \forall \delta < {\gamma}_{s}\[{\sigma}_{s}(\delta) = i \ \ \text{iff} \ b \cap e(s) \cap {e}^{i}_{\delta} \in {\I}^{+}{({\A}_{\alpha})}\]$.	
	\end{enumerate} 
Note that condition (3) entails that for each $s \in {2}^{\leq n}$ and $\delta < {\gamma}_{s}$, $b \cap e(s) \cap {e}^{1 - {\sigma}_{s}{(\delta)}}_{\delta} \notin {\I}^{+}{({\A}_{\alpha})}$. Now, given $s \in {2}^{\leq n}$ and $i \in 2$, apply Lemma \ref{lem:positivesplitting} to find the least $\gamma < \kappa$ such that both $b \cap e({s}^{\frown}{\langle i \rangle}) \cap {e}^{0}_{\gamma}$ and $b \cap e({s}^{\frown}{\langle i \rangle}) \cap {e}^{1}_{\gamma}$ are in ${\I}^{+}{({\A}_{\alpha})}$. Again, for each $\delta < {\gamma}$, there is a unique $j \in 2$ such that $b \cap e({s}^{\frown}{\langle i \rangle}) \cap {e}^{j}_{\delta} \in {\I}^{+}{({\A}_{\alpha})}$. Moreover, by (3), for each $\delta < {\gamma}_{s}$, $b \cap e({s}^{\frown}{\langle i \rangle}) \cap {e}^{1 - {\sigma}_{s}{(\delta)}}_{\delta} \notin {\I}^{+}{({\A}_{\alpha})}$. Also, $b \cap e({s}^{\frown}{\langle i \rangle}) \cap {e}^{1 - i}_{{\gamma}_{s}} = 0$. Therefore, $\gamma > {\gamma}_{s}$. Thus if we define ${\gamma}_{{s}^{\frown}{\langle i \rangle}} = \gamma$, and ${\sigma}_{{s}^{\frown}{\langle i \rangle}}: {\gamma}_{{s}^{\frown}{\langle i \rangle}} \rightarrow 2$ by ${\sigma}_{{s}^{\frown}{\langle i \rangle}}(\delta) = j$ iff $b \cap e({s}^{\frown}{\langle i \rangle}) \cap {e}^{j}_{\delta} \in {\I}^{+}{({\A}_{\alpha})}$ for each $\delta < {\gamma}_{{s}^{\frown}{\langle i \rangle}}$, then ${\sigma}_{{s}^{\frown}{\langle i \rangle}} \supset {{\sigma}_{s}}^{\frown}{\langle i \rangle}$, and conditions (1)--(3) hold.

	Now, since $\lc \T \rc < \c$, there is $f \in {2}^{\omega}$ such that $\tau = {\bigcup}_{n \in \omega}{{\sigma}_{f \restrict n}} \notin \T$. Notice that $b \; \cap \; e(f \restrict 0) \supset b \; \cap \; e(f \restrict 1) \supset \dotsb$  is a decreasing sequence of sets in ${\I}^{+}{({\A}_{\alpha})}$. Therefore, we may choose ${b}_{0} \in {\[b\]}^{\omega}$ such that ${b}_{0} \in {\I}^{+}{({\A}_{\alpha})}$, and ${b}_{0} \; {\subset}^{\ast} \; b \; \cap \; e(f \restrict n)$ for each $n \in \omega$. We claim that for all $\delta < \gamma = \sup\{{\gamma}_{f \restrict n}: n \in \omega\}$, ${b}_{0} \cap {e}^{1 - \tau(\delta)}_{\delta} \notin {\I}^{+}{({\A}_{\alpha})}$. Indeed, if $\delta < \gamma$, then $\delta < {\gamma}_{f \restrict n}$ for some $n \in \omega$, and so by (3), $b \cap e(f \restrict n) \cap {e}^{1 - \tau(\delta)}_{\delta} \notin {\I}^{+}{({\A}_{\alpha})}$. And since ${b}_{0} {\subset}^{\ast} b \cap e(f \restrict n)$, the claim follows. Therefore, for each $\delta < \gamma$, there is a finite set ${\F}_{\delta} \subset {\A}_{\alpha}$ such that ${b}_{0} \cap {e}^{1 - \tau(\delta)}_{\delta} {\subset}^{\ast} \bigcup{{\F}_{\delta}}$. Put $\F = {\bigcup}_{\delta < \gamma}{{\F}_{\delta}}$, and observe that $\lc \F \rc \leq \lc \gamma \rc$. Observe also that since ${\gamma}_{f \restrict n} < {\gamma}_{f \restrict (n + 1)}$, $\gamma$ is a limit ordinal and that $\cf(\gamma) = \omega$. Next, put $\GG = \left\{{a}_{\beta}: \[\beta < \alpha\] \wedge \[\eta({a}_{\beta}) \subset \tau \vee \exists n \in \omega \[\eta({d}^{\beta}_{n}) \subset \tau\]\]\right\}$, and note that $\lc \GG \rc \leq \lc \gamma \rc$, and that $\lc \F \cup \GG \rc \leq \lc \gamma \rc$. Now, if there exists a set $c \in {\[{b}_{0}\]}^{\omega}$ which is a.d.\ from every $a \in \F \cup \GG$, then for each $\delta < \gamma$, $c \cap {e}^{1 - \tau(\delta)}_{\delta}$ is finite, and hence $c \; {\subset}^{\ast} \; {e}^{\tau(\delta)}_{\delta}$. We claim that such a $c$ must be a.d.\ from every ${a}_{\beta}\in {\A}_{\alpha}$. Fix ${a}_{\beta} \in {\A}_{\alpha}$, and recall that ${a}_{\beta} = {\bigcup}_{n \in \omega}{{d}^{\beta}_{n}}$, where ${\vec{D}}^{\beta} = \langle {d}^{\beta}_{n}: n \in \omega \rangle$ is a sequence of columns. Since $\tau \notin \T$, $\eta({a}_{\beta}) \not\supset \tau$, and there is no $n \in \omega$ such that $\eta({d}^{\beta}_{n}) \supset \tau$. If either $\eta({a}_{\beta}) \subset \tau$, or there exists an $n \in \omega$ such that $\eta({d}^{\beta}_{n}) \subset\tau$, then ${a}_{\beta} \in \GG$, and $c \cap {a}_{\beta}$ is finite. So suppose that there is a $\delta < \min{\{\gamma, \dom(\eta{({a}_{\beta})})\}}$ such that $\tau(\delta) \neq \eta({a}_{\beta})(\delta)$, and also that for each $n \in \omega$, there is a ${\delta}_{n} < \min{\{\gamma, \dom{(\eta({d}^{\beta}_{n}))}\}}$ such that $\tau({\delta}_{n}) \neq \eta({d}^{\beta}_{n})({\delta}_{n})$. Since ${\vec{D}}^{\beta} \in {\I}_{\eta({a}_{\beta})}$ by $({\dagger}_{{a}_{\beta}})$, there is a $k \in \omega$ so that $\forall n \geq k\[{d}^{\beta}_{n} \subset {e}^{\eta({a}_{\beta})(\delta)}_{\delta}\]$, and $c \; {\subset}^{\ast} {e}^{1 - \eta({a}_{\beta})(\delta)}_{\delta}$. Therefore, ${\bigcup}_{n \geq k}{{d}^{\beta}_{n}} \subset {e}^{\eta({a}_{\beta})(\delta)}_{\delta}$, and so $c \cap \left( {\bigcup}_{n \geq k}{{d}^{\beta}_{n}} \right) \subset c \cap {e}^{\eta({a}_{\beta})(\delta)}_{\delta}$, which is finite. Thus,
	\begin{align*}
		c \cap {a}_{\beta} \; {=}^{\ast} \; c \cap \left( {\bigcup}_{n < k}{{d}^{\beta}_{n}}\right)
	\end{align*}
and so it suffices to show that $c \cap {d}^{\beta}_{n}$ is finite for each $n < k$. But for each such $n$, $c \; {\subset}^{\ast} \; {e}^{\tau({\delta}_{n})}_{{\delta}_{n}}$, while ${d}^{\beta}_{n} \; {\subset}^{\ast} \; {e}^{1 - \tau({\delta}_{n})}_{{\delta}_{n}}$ because of $({\dagger}_{{d}^{\beta}_{n}})$, giving us the desired conclusion.

	We next argue that there must be a $c \in {\[{b}_{0}\]}^{\omega}$ which is a.d.\ from every $a \in \F \cup \GG$. There are two cases to consider here. First suppose that $\cf({\s}_{\omega, \omega}) \neq \omega$. In this case, $\gamma < {\s}_{\omega, \omega} \leq \b \leq \a$, and so $\lc \F \cup \GG \rc < \a$. Since ${b}_{0} \in {\I}^{+}{({\A}_{\alpha})}$, there is a $c$ as required. Also, since $\dom{(\tau)} = \gamma$, we have that $\tau \in \left( {2}^{< \kappa} \right) \setminus \T$, which is as required.

	Next, suppose that $\cf({\s}_{\omega, \omega}) = \omega$. Then $\gamma$ could equal ${\s}_{\omega, \omega}$ \emph{a priori}. However, we claim that this cannot happen. To see this, note that since $\b$ is regular, in this case, we have that ${\s}_{\omega, \omega} < \b$, and so $\lc \F \cup \GG \rc \leq {\s}_{\omega, \omega} < \b \leq \a$. So again, since ${b}_{0} \in {\I}^{+}{({\A}_{\alpha})}$, there is $c \in {\[{b}_{0}\]}^{\omega}$ which is a.d.\ from every $a \in \F \cup \GG$. Now, we have argued above that for any such $c$, $\forall \delta < \gamma \[c \; {\subset}^{\ast} \; {e}^{\tau(\delta)}_{\delta} \]$. So if $\gamma = {\s}_{\omega, \omega}$, then there would be no $\delta < {\s}_{\omega, \omega}$ such that ${e}_{\delta}$ split $c$, contradicting the definition of ${\s}_{\omega, \omega}$. Therefore, $\gamma < {\s}_{\omega, \omega} = \kappa$, and again $\tau \in \left( {2}^{< \kappa} \right) \setminus \T$, as needed.
\end{proof}
\begin{Def} \label{def:refining}
	We say that a sequence of columns $\vec{D}$ \emph{refines} another such sequence $\vec{C}$, and write $\vec{D} \prec \vec{C}$, if there is a sequence $\langle {k}_{n}: n \in \omega \rangle \subset \omega$ such that $\forall n \in \omega \[{k}_{n + 1} > {k}_{n} \ \text{and} \ \vec{D}(n) \subset \vec{C}({k}_{n})\]$. Given $e \in \cube$ and $i \in \omega$, $e(i)$ denotes the $i$th element of $e$. Given a sequence of columns $\vec{C}$, and $e \in \cube$, $\vec{C} \restrict e$ is the sequence of columns defined by $\left( \vec{C} \restrict e \right)(n) = \vec{C}(e(n))$ for each $n \in \omega$. It is clear that $\prec$ is a transitive relation, and that $\forall \vec{C} \in \C \forall e \in \cube \[ \vec{C} \restrict e \prec \vec{C} \]$.
\end{Def}
This next lemma is the analogue of Lemma \ref{lem:existsperfecttree} for sequences of columns. It is here that comparing $\s$ to $\b$ rather than to $\a$ becomes important.
\begin{Lemma} \label{lem:Ietasplitting}
	Let ${\A}_{\alpha} = \{{a}_{\beta}: \beta < \alpha\}$. Suppose that $\vec{C}$ is a sequence of columns such that for each $n \in \omega$, $\vec{C}(n)$ is a.d.\ from every member of ${\A}_{\alpha}$. There is an $\eta \in {2}^{< \kappa}$ and a $\vec{D} \in {\I}_{\eta}$ such that 
\begin{enumerate}
	\item	
		$\vec{D} \prec \vec{C}$
	\item
		$\existsinf n \in \omega \[\lc \vec{D}(n) \cap {e}^{0}_{\dom{(\eta)}}\rc = \omega\]$
	\item
		$\existsinf n \in \omega \[\lc \vec{D}(n) \cap {e}^{1}_{\dom{(\eta)}} \rc = \omega\]$.
\end{enumerate}
\end{Lemma}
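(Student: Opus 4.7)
The plan is to let $\alpha<\kappa$ be the least ordinal for which both $\existsinf n \[\lc\vec{C}(n)\cap{e}^{0}_{\alpha}\rc=\omega\]$ and $\existsinf n \[\lc\vec{C}(n)\cap{e}^{1}_{\alpha}\rc=\omega\]$ hold; such an $\alpha$ exists by applying Definition~\ref{def:somegaomega} to the countable family $\{\vec{C}(n):n\in\omega\}\subset\cube$. I will then take $\eta\in{2}^{\alpha}$. Refining $\vec{C}$ so that it lies inside ${e}^{0}_{\alpha}$ along infinitely many indices and inside ${e}^{1}_{\alpha}$ along infinitely many other indices will yield (2) and (3) immediately, while the values of $\eta$ on $[0,\alpha)$, read off from how $\vec{C}$ behaves against $e_\delta$ for $\delta<\alpha$, together with the hypothesis $\s_{\omega,\omega}\le\b$, will force $\vec{D}\in\I_\eta$.

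To define $\eta$, I observe that by the minimality of $\alpha$, for each $\delta<\alpha$ the two $\existsinf$ statements for $e_\delta$ cannot both hold, and they cannot both fail either: the latter would force $\vec{C}(n)$ to be finite for cofinitely many $n$, contradicting $\vec{C}\in\C$. Hence there is a unique $\eta(\delta)\in 2$ with $\vec{C}(n)\;{\subset}^{\ast}\;{e}^{\eta(\delta)}_{\delta}$ for all but finitely many $n$. Fix $k_\delta\in\omega$ so that $\vec{C}(n)\cap{e}^{1-\eta(\delta)}_{\delta}$ is finite for every $n\ge k_\delta$, and define $h_\delta\in\BS$ by setting $h_\delta(n)=\max(\vec{C}(n)\cap{e}^{1-\eta(\delta)}_{\delta})+1$ when $n\ge k_\delta$ and $h_\delta(n)=0$ otherwise.

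The heart of the argument, and the place where the hypothesis $\s_{\omega,\omega}\le\b$ enters, is a uniform trimming of the columns of $\vec{C}$ that simultaneously pushes each surviving column inside ${e}^{\eta(\delta)}_{\delta}$ for every $\delta<\alpha$. Since $\alpha<\kappa\le\b$ and $\b$ is regular, $\lc\alpha\rc<\b$, so the family $\{h_\delta:\delta<\alpha\}$ has a common ${\le}^{\ast}$-upper bound $h^{\ast}\in\BS$. Using that $\alpha$ splits $\vec{C}$ both ways, I choose a strictly increasing sequence $\langle n_k:k\in\omega\rangle$ with $\lc\vec{C}(n_k)\cap{e}^{k\bmod 2}_{\alpha}\rc=\omega$ for every $k$, and set $\vec{D}(k)=(\vec{C}(n_k)\cap{e}^{k\bmod 2}_{\alpha})\setminus h^{\ast}(n_k)$. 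Each $\vec{D}(k)$ is infinite, and the $\vec{D}(k)$'s are pairwise disjoint as subsets of distinct columns of $\vec{C}$, so $\vec{D}\in\C$ and $\vec{D}\prec\vec{C}$; conditions (2) and (3) are immediate from $\vec{D}(k)\subset{e}^{k\bmod 2}_{\alpha}$. Finally, for each $\delta<\alpha$ and every $k$ large enough that $n_k\ge k_\delta$ and $h_\delta(n_k)\le h^{\ast}(n_k)$, one has $\vec{C}(n_k)\cap{e}^{1-\eta(\delta)}_{\delta}\subset h^{\ast}(n_k)$, whence $\vec{D}(k)\subset{e}^{\eta(\delta)}_{\delta}$, so that $\vec{D}\in\I_\eta$ as required.
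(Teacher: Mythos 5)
Your proof is correct and follows the same route as the paper's: choose the least $\alpha < \kappa$ splitting the columns of $\vec{C}$ both ways, read off $\eta(\delta)$ from the side on which the columns eventually lie, and exploit $\alpha < \kappa \leq \b$ to dominate the trimming functions $h_\delta$. The only cosmetic difference is the final step, where the paper simply sets $\vec{D}(n) = \vec{C}(n) \setminus f(n)$ (so each column is changed only finitely and (2), (3) transfer directly from the choice of $\alpha$), whereas you additionally pass to an interleaving subsequence $\langle n_k \rangle$ and intersect with ${e}^{k \bmod 2}_{\alpha}$ --- both work.
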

\begin{proof}
	By definition of ${\s}_{\omega, \omega}$, there is a $\gamma < \kappa$ such that $\existsinf n \in \omega \[\lc \vec{C}(n) \cap {e}^{0}_{\gamma} \rc = \omega\]$ and $\existsinf n \in \omega \[\lc \vec{C}(n) \cap {e}^{1}_{\gamma}\rc = \omega\]$. Choose the least such $\gamma$. So for each $\delta < \gamma$, there is a unique $j \in 2$ such that $\existsinf n \in \omega \[\lc \vec{C}(n) \cap {e}^{j}_{\delta} \rc = \omega\]$. Define $\eta: \gamma \rightarrow 2$ by $\eta(\delta) = j$ iff $\existsinf n \in \omega \[\lc \vec{C}(n) \cap {e}^{j}_{\delta} \rc = \omega\]$ for all $\delta < \gamma$. To get $\vec{D}$, note that for each $\delta < \gamma$, there is a ${k}_{\delta} \in \omega$ such that $\forall n \geq {k}_{\delta}\[\lc \vec{C}(n) \cap {e}^{1 - \eta(\delta)}_{\delta} \rc < \omega \]$. So we can define a function ${f}_{\delta}: \omega \rightarrow \omega$ by ${f}_{\delta}(n) = \max{\left(\vec{C}(n) \cap {e}^{1 - \eta(\delta)}_{\delta}\right)}$ for each $n \geq {k}_{\delta}$, and ${f}_{\delta}(n) = 0$, for each $n < {k}_{\delta}$. Now, since $\gamma < {\s}_{\omega, \omega} \leq \b$, find a function $f \in \BS$ such that for each $\delta < \gamma$, $\forallbutfin n \in \omega\[f(n) > {f}_{\delta}(n)\]$. Now, put $\vec{D}(n) = \vec{C}(n) \setminus f(n)$. It is clear that $\vec{D} \prec \vec{C}$. Also, since $\vec{D}(n) \; {=}^{\ast} \; \vec{C}(n)$, (2) and (3) are satisfied by the choice of $\gamma$. Finally to see that $\vec{D} \in {\I}_{\eta}$, fix $\delta < \gamma$. There is an $m \in \omega$ such that $\forall n \geq m\[f(n) > {f}_{\delta}(n)\]$. Now, suppose that $n \geq \max\{{k}_{\delta}, m\}$. Then if $l \in \vec{D}(n)$, then $l \in \vec{C}(n)$ and $l > \max{\left(\vec{C}(n) \cap {e}^{1 - \eta(\delta)}_{\delta}\right)}$, whence $l \in {e}^{\eta(\delta)}_{\delta}$. Thus we have shown that $\forallbutfin n \in \omega\[\vec{D}(n) \subset {e}^{\eta(\delta)}_{\delta}\]$.     
\end{proof}
The next lemma is easy, but plays a crucial role in the construction, and depends a lot on having the right definition of ${\I}_{\eta}$. It is a sticking point in further applications of this technique that needs to be resolved each time by finding a definition of ${\I}_{\eta}$ that is appropriate for the specific type of a.d.\ family being sought.
\begin{Lemma} \label{lem:prec}
Suppose $\langle{\sigma}_{n}: n \in \omega\rangle \subset {2}^{< \kappa}$, $\langle {\gamma}_{n} : n \in \omega\rangle \subset \kappa$, and $\langle {\vec{C}}_{n}: n \in \omega \rangle \subset \C$ are sequences such
\begin{enumerate}
	\item
		$\forall n \in \omega \[\dom{({\sigma}_{n}) = {\gamma}_{n} \ \text{and} \ {\gamma}_{n + 1} > {\gamma}_{n}} \ \text{and} \ {\sigma}_{n + 1} \supset {\sigma}_{n}\]$
	\item
		$\forall n \in \omega\[{\vec{C}}_{n} \in {\I}_{{\sigma}_{n}} \ \text{and} \ {\vec{C}}_{n + 1} \prec {\vec{C}}_{n}\]$.
\end{enumerate}
Then there is a sequence of columns $\vec{D} \in {\I}_{\sigma}$, where $\sigma = {\bigcup}_{n \in \omega}{{\sigma}_{n}}$, such that $\forall n \in \omega \[\left( \vec{D} \restrict [n , \omega) \right) \prec {\vec{C}}_{n} \]$. 
\end{Lemma}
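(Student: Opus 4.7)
The plan is to construct $\vec{D}$ diagonally by selecting one column from each $\vec{C}_n$, that is, to set $\vec{D}(n) = \vec{C}_n(m_n)$ for a carefully chosen sequence $\langle m_n : n \in \omega\rangle \subset \omega$. The three things to arrange are: the $\vec{D}(n)$'s are pairwise disjoint (so that $\vec{D} \in \C$); for each fixed $n$, the tail $\langle \vec{D}(k) : k \geq n\rangle$ refines $\vec{C}_n$; and $\vec{D} \in \I_\sigma$. All three will be consequences of a single monotonicity requirement on the $m_n$'s.

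First, unfold the refinement hypothesis. For each $n$, condition (2) gives an increasing $\phi_n : \omega \to \omega$ with $\vec{C}_{n+1}(j) \subset \vec{C}_n(\phi_n(j))$. Define $\psi^n_n = \mathrm{id}$ and $\psi^n_k = \phi_n \circ \phi_{n+1} \circ \cdots \circ \phi_{k-1}$ for $k > n$. Each $\psi^n_k$ is strictly increasing and $\vec{C}_k(j) \subset \vec{C}_n(\psi^n_k(j))$ for all $n \leq k$ and $j \in \omega$; moreover $\psi^n_{k+1} = \psi^n_k \circ \phi_k$.

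Next, pick $m_n$ recursively so that at each step the single inequality
\begin{align*}
\psi^{n'}_{n+1}(m_{n+1}) > \psi^{n'}_n(m_n) \quad \text{for every } n' \leq n
\end{align*}
holds. This is a finite conjunction of strict inequalities and each $\psi^{n'}_{n+1}$ is unbounded, so $m_{n+1}$ can always be chosen. By telescoping, for any $n' \leq n < k$ one obtains $\psi^{n'}_n(m_n) < \psi^{n'}_k(m_k)$.

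It remains to verify that $\vec{D}(n) := \vec{C}_n(m_n)$ works. Disjointness: for $n < k$, $\vec{D}(n) = \vec{C}_n(m_n)$ and $\vec{D}(k) \subset \vec{C}_n(\psi^n_k(m_k))$ with $m_n < \psi^n_k(m_k)$, so disjointness of columns in $\vec{C}_n$ settles it. Refinement: for each fixed $n$ and $k \geq n$, the witnessing indices $\psi^n_k(m_k)$ are strictly increasing in $k$, so $\vec{D} \restrict [n, \omega) \prec \vec{C}_n$. Finally, to check $\vec{D} \in \I_\sigma$, fix $\beta < \dom(\sigma) = \sup_n \gamma_n$ and pick $n$ with $\beta < \gamma_n$; then $\sigma(\beta) = \sigma_n(\beta)$, and since $\vec{C}_n \in \I_{\sigma_n}$ there is $k_0$ with $\vec{C}_n(j) \subset e^{\sigma(\beta)}_\beta$ for all $j \geq k_0$. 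Because $\psi^n_k(m_k) \to \infty$ as $k \to \infty$, eventually $\vec{D}(k) \subset \vec{C}_n(\psi^n_k(m_k)) \subset e^{\sigma(\beta)}_\beta$, as required.

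There is no real obstacle here; the only point requiring care is the bookkeeping of the composed refinement maps $\psi^n_k$ and recognising that the single monotonicity requirement on $\langle m_n \rangle$ simultaneously yields disjointness, tail-refinement, and membership in $\I_\sigma$.
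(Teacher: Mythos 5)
Your argument is correct and follows essentially the same diagonal approach as the paper: the paper simply takes $\vec{D}(n)=\vec{C}_n(n)$ and relies on the automatic fact that any refinement witness $\langle k_j\rangle$ satisfies $k_j\geq j$, whereas you build $m_n$ recursively via the composed maps $\psi^{n'}_k$ to enforce the same monotonicity. Your version makes explicit the composition bookkeeping that the paper glosses over in asserting ``it is also clear that $\vec D\restrict[n,\omega)\prec\vec C_n$''; the substance is the same.
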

\begin{proof}
	Simply define a sequence of columns $\vec{D}$ by $\vec{D}(n) = {\vec{C}}_{n}(n)$. Note that $\vec{D}$ is indeed a sequence of columns because if $n < l$, then since ${\vec{C}}_{l} \prec {\vec{C}}_{n}$, $\vec{D}(l) = {\vec{C}}_{l}(l) \subset {\vec{C}}_{n}({k}_{l})$ for some ${k}_{l} \geq l > n$. Therefore, ${\vec{C}}_{n}(n) \cap {\vec{C}}_{n}({k}_{l}) = 0$, and so $\vec{D}(n) \cap \vec{D}(l) = 0$. Put $\gamma = \sup\{{\gamma}_{n}: n \in \omega\}$, and note that $\gamma \leq \kappa$ is a limit ordinal with $\cf(\gamma) = \omega$. Now, we claim that for each $\delta < \gamma$, $\forallbutfin n \in \omega \[\vec{D}(n) \subset {e}^{\sigma(\delta)}_{\delta}\]$. Indeed, given $\delta < \gamma$, fix $i \in \omega$ such that $\delta < {\gamma}_{i}$. Now, there is an $m \in \omega$ such that $\forall n \geq m \[{\vec{C}}_{i}(n) \subset {e}^{\sigma(\delta)}_{\delta}\]$. Put $l = \max\{m, i\}$. Suppose $n \geq l$. Then since ${\vec{C}}_{n} \prec {\vec{C}}_{i}$, there is a ${k}_{n} \geq n$ such that $\vec{D}(n) = {\vec{C}}_{n}(n) \subset {\vec{C}}_{i}({k}_{n}) \subset {e}^{\sigma(\delta)}_{\delta}$. It is also clear that ${\vec{D}} \restrict \left[n, \omega \right) \prec {\vec{C}}_{n}$ holds for each $n \in \omega$.   
\end{proof}
\begin{proof}[Proof of Theorem \ref{thm:main}]
	The argument will be similar in structure to the proof of Lemma \ref{lem:existsperfecttree}. Suppose that at stage $\alpha < \c$, we are given a collection $\{{b}_{n}: n \in \omega\} \subset \cube$ such that for each $n \in \omega$, ${b}_{n} \in {\I}^{+}{({\A}_{\alpha})}$, where ${\A}_{\alpha} = \{{a}_{\beta}: \beta < \alpha \}$. We want to find an ${a}_{\alpha}$ which is a.d.\ from ${\A}_{\alpha}$ with the property that $\lc {a}_{\alpha} \cap {b}_{n} \rc = \omega$ for infinitely many $n \in \omega$. Moreover, we want to enlarge ${\T}^{\alpha}$ to a bigger subtree, ${\T}_{\alpha}$, of ${2}^{< \kappa}$, as well as find a sequence of columns ${\vec{D}}^{\alpha}$, and nodes $\eta({a}_{\alpha})$ and $\eta({\vec{D}}^{\alpha}(n))$ in ${\T}_{\alpha}$ in such way that ${a}_{\alpha} = {\bigcup}_{n \in \omega}{{\vec{D}}^{\alpha}(n)}$ and $({\dagger}_{{a}_{\alpha}})$ and $({\dagger}_{{\vec{D}}^{\alpha}(n)})$ hold.

	First find ${c}_{n} \in {\[{b}_{n}\]}^{\omega}$ and nodes ${\tau}_{n} \in {2}^{< \kappa}$ as follows. Given $\{{c}_{i}: i < n \}$ and $\{{\tau}_{i}: i < n\}$, apply Lemma \ref{lem:existsperfecttree} with ${b}_{n}$ as $b$ and ${\T}^{\alpha} \cup \{{\tau}_{i} \restrict \delta: i < n \wedge \delta \leq \dom{({\tau}_{i})}\}$ as $\T$ to find ${c}_{n} \in {\[{b}_{n}\]}^{\omega}$ which is a.d.\ from every $a \in {\A}_{\alpha}$ and a node ${\tau}_{n} \in \left({2}^{< \kappa}\right) \setminus \T$ such that 
\begin{align*}
	\forall \delta < \dom{({\tau}_{n})}\[{c}_{n} \; {\subset}^{\ast} \; {e}^{{\tau}_{n}(\delta)}_{\delta}\]. \tag{${\ast}_{1}$}
\end{align*}  
We may also assume, by shrinking them further if necessary, that ${c}_{n} \cap {c}_{m} = 0$ for all $n \neq m$. Now, construct a perfect subtree $\PP = \{{\sigma}_{s}: s \in {2}^{< \omega}\}$ of ${2}^{< \kappa}$ together with a collection of ordinals $\{{\gamma}_{s}: s \in {2}^{< \omega}\} \subset \kappa$, and a collection of sequences of columns $\{{\vec{C}}_{s}: s \in {2}^{< \omega}\} \subset \C$ so that the following conditions are satisfied. 
\begin{enumerate}
	\item
		$\forall s \in {2}^{< \omega} \forall i \in 2 \[ \dom{({\sigma}_{s})} = {\gamma}_{s} \wedge {\sigma}_{{s}^{\frown}{\langle i \rangle}} \supset {{\sigma}_{s}}^{\frown}{\langle i \rangle}\]$
	\item
		$\forall s \in {2}^{< \omega} \[\existsinf n \in \omega \[\lc {\vec{C}}_{s}(n) \cap {e}^{0}_{{\gamma}_{s}} \rc = \omega \] \wedge \existsinf n \in \omega \[\lc {\vec{C}}_{s}(n) \cap {e}^{1}_{{\gamma}_{s}}\rc = \omega \]\]$.
	\item
		$\forall s \in {2}^{< \omega} \forall i \in 2 \[{\vec{C}}_{s} \in {\I}_{{\sigma}_{s}} \wedge {\vec{C}}_{{s}^{\frown}{\langle i \rangle}} \; \prec \; {\vec{C}}_{s} \]$.
\end{enumerate}
To start with, define a sequence of columns ${\vec{E}}_{0}$ by ${\vec{E}}_{0}(n) = {c}_{n}$. Now, suppose that ${\vec{E}}_{s} \prec {\vec{E}}_{0}$ is given for some $s \in {2}^{< \omega}$. To obtain ${\sigma}_{s}$, apply Lemma \ref{lem:Ietasplitting} to ${\vec{E}}_{s}$ to find ${\sigma}_{s} \in {2}^{< \kappa}$ and a ${\vec{C}}_{s} \prec {\vec{E}}_{s}$ such that ${\vec{C}}_{s} \in {\I}_{{\sigma}_{s}}$, and $\existsinf n \in \omega \[\lc {\vec{C}}_{s}(n) \cap {e}^{0}_{{\gamma}_{s}} \rc  = \omega\]$ and $\existsinf n \in \omega \[\lc {\vec{C}}_{s}(n) \cap {e}^{1}_{{\gamma}_{s}} \rc  = \omega\]$, where ${\gamma}_{s} = \dom{({\sigma}_{s})}$. Now, for each $i \in 2$, let $\langle {n}^{i}_{j}: j \in \omega \rangle$ enumerate in strictly increasing order $\left\{n \in \omega: \lc {{\vec{C}}_{s}}(n) \cap {e}^{i}_{{\gamma}_{s}} \rc = \omega \right\}$, and define a sequence of columns ${\vec{E}}_{{s}^{\frown}{\langle i \rangle}}$, by ${\vec{E}}_{{s}^{\frown}{\langle i \rangle}} (j) = {{\vec{C}}_{s}}({n}^{i}_{j}) \cap {e}^{i}_{{\gamma}_{s}}$. It is clear that (2) is satisfied. (3) will be satisfied because ${\vec{E}}_{{s}^{\frown}{\langle i \rangle}} \prec {\vec{C}}_{s}$, and therefore, ${\vec{C}}_{{s}^{\frown}{\langle i \rangle}} \; \prec \; {\vec{E}}_{{s}^{\frown}{\langle i \rangle}} \; \prec \; {\vec{C}}_{s}$. To see that (1) holds, note that ${\vec{E}}_{{s}^{\frown}{\langle i \rangle}} \in {\I}_{\left({({\sigma}_{s})}^{\frown}{\langle i \rangle} \right)}$. Since ${\gamma}_{{s}^{\frown}{\langle i \rangle}} = \dom{({\sigma}_{{s}^{\frown}{\langle i \rangle}})}$ is chosen in such a way that $\existsinf n \in \omega \[\lc {\vec{C}}_{{s}^{\frown}{\langle i \rangle}} (n) \cap {e}^{0}_{{\gamma}_{({s}^{\frown}{\langle i \rangle})}} \rc = \omega \]$ and $\existsinf n \in \omega \[\lc {\vec{C}}_{{s}^{\frown}{\langle i \rangle}} (n) \cap {e}^{1}_{{\gamma}_{({s}^{\frown}{\langle i \rangle})}} \rc = \omega \]$, it follows that ${\gamma}_{{s}^{\frown}{\langle i \rangle}} > {\gamma}_{s}$. Moreover, since ${\vec{C}}_{{s}^{\frown}{\langle i \rangle}} \in {\I}_{{\sigma}_{({s}^{\frown}{\langle i \rangle})}}$, if there is a $\delta \leq {\gamma}_{s}$ such that ${({\sigma}_{s})}^{\frown}{\langle i \rangle}(\delta) \neq {\sigma}_{{s}^{\frown}{\langle i \rangle}} (\delta)$, then there would an $n \in \omega$ such that ${\vec{C}}_{{s}^{\frown}{\langle i \rangle}}(n) \subset {e}^{0}_{\delta}$ and ${\vec{C}}_{{s}^{\frown}{\langle i \rangle}}(n) \subset {e}^{1}_{\delta}$, which is impossible. Therefore, ${\sigma}_{{s}^{\frown}{\langle i \rangle}} \supset {({\sigma}_{s})}^{\frown}{\langle i \rangle}$, and so (1) is satisfied.

	Now, put $\T = {\T}^{\alpha} \cup \{{\tau}_{n} \restrict \delta: n < \omega \wedge \delta \leq \dom{({\tau}_{n})}\}$, and note $\lc  \T \rc < \c$. Therefore, there is an $f \in {2}^{\omega}$ such that $\tau = {\bigcup}_{n \in \omega}{{\sigma}_{f \restrict n}} \notin \T$. By (1)--(3), we have that $\dom{({\sigma}_{f \restrict n})} = {\gamma}_{f \restrict n}$, that ${\gamma}_{f \restrict n + 1} > {\gamma}_{f \restrict n}$, that ${\sigma}_{f \restrict n + 1} \supset {\sigma}_{f \restrict n}$, that ${\vec{C}}_{f \restrict n} \in {\I}_{({\sigma}_{f \restrict n})}$, and that ${\vec{C}}_{f \restrict n + 1} \prec {\vec{C}}_{f \restrict n}$. So the hypotheses of Lemma \ref{lem:prec} are satisfied and we can find a sequence of columns $\vec{E} \in {\I}_{\tau}$ with $\vec{E} \prec {\vec{C}}_{0} \prec {\vec{E}}_{0}$. We set 
\begin{align*}
	\eta({a}_{\alpha}) = \tau \tag{${\ast}_{2}$}.
\end{align*} 
Notice that $\dom{(\tau)} = \gamma = \sup\{{\gamma}_{f \restrict n}: n \in \omega\}$. Clearly, $\gamma \leq \kappa$ is a limit ordinal, and $\cf{(\gamma)} = \omega$. To see that $\gamma \neq \kappa$, we argue as in Lemma \ref{lem:existsperfecttree}. If $\gamma = \kappa$, then since $\vec{E} \in {\I}_{\tau}$, there is no $\delta < \kappa$ so that $\existsinf n \in \omega \[\lc \vec{E}(n) \cap {e}^{0}_{\delta} \rc = \omega\]$ and $\existsinf n \in \omega \[\lc \vec{E}(n) \cap {e}^{1}_{\delta} \rc = \omega\]$, contradicting the definition of ${\s}_{\omega, \omega}$. Thus $\gamma < \kappa$, and so $\eta({a}_{\alpha}) \in {2}^{< \kappa}$, as needed.

Next, to define ${\vec{D}}^{\alpha}$, proceed as follows. Since $\vec{E} \prec {\vec{E}}_{0}$, $\vec{E}(n)$ is a.d.\ from ${\A}_{\alpha}$ for each $n \in \omega$. For each $\delta < \gamma$, either if there exists $\beta < \alpha$ such that $\eta({a}_{\beta}) = \tau \restrict \delta$, or if there exists a $\beta < \alpha$ and $m \in \omega$ with $\eta({d}^{\beta}_{m}) = \tau \restrict \delta$, we define a function ${f}_{\delta} \in \BS$ as follows. Given $n \in \omega$, we set ${f}_{\delta}(n) = \max{(\vec{E}(n) \cap {a}_{\beta})}$, where $\beta$, assuming it exists, is the unique $\beta < \alpha$ such that either $\eta({a}_{\beta}) = \tau \restrict \delta$ or $\eta({d}^{\beta}_{m}) = \tau \restrict \delta$ for some $m \in \omega$. Notice that since $\gamma < {\s}_{\omega, \omega} \leq \b$, we can find a function $f \in \BS$ such that
\begin{align*}
	\forall \delta < \gamma \[ \[\exists \beta < \alpha \[\eta({a}_{\beta}) = \tau \restrict \delta\] \vee \exists \beta < \alpha \exists m \in \omega \[\eta({d}^{\beta}_{m}) = \tau \restrict \delta\] \] \implies {f}_{\delta} \; {<}^{\ast} \; f\].
\end{align*}     
Now, define ${\vec{D}}^{\alpha}$ by ${\vec{D}}^{\alpha}(n) = \vec{E}(n) \setminus f(n)$ for each $n \in \omega$. It is clear that ${\vec{D}}^{\alpha} \prec \vec{E}$, and therefore, ${\vec{D}}^{\alpha} \in {\I}_{\tau}$. So $({\dagger}_{{a}_{\alpha}})$ is satisfied. Next, we put
\begin{align*}
	{a}_{\alpha} = {\bigcup}_{n \in \omega}{{\vec{D}}^{\alpha}(n)} \tag{${\ast}_{3}$}
\end{align*}
Next, suppose that the relation ${\vec{D}}^{\alpha} \prec {\vec{E}}_{0}$ is witnessed by the sequence $\langle {k}_{n}: n \in \omega \rangle$. Notice that for each $n \in \omega$, ${\vec{D}}^{\alpha}(n) \in {\[{b}_{{k}_{n}}\]}^{\omega}$, and hence that $\lc {a}_{\alpha} \cap {b}_{{k}_{n}}\rc = \omega$. Now, for each $n \in \omega$, we set
\begin{align*}
	\eta({\vec{D}}^{\alpha}(n)) = {\tau}_{{k}_{n}}.
\end{align*} 
By (${\ast}_{1}$), we have that for each $n \in \omega$, $\forall \delta < \dom{(\eta({\vec{D}}^{\alpha}(n)))} \[{\vec{D}}^{\alpha}(n) \; {\subset}^{\ast} \; {e}^{\eta({\vec{D}}^{\alpha}(n))(\delta)}_{\delta} \]$, hence $({\dagger}_{{\vec{D}}^{\alpha}(n)})$ is satisfied. Note also, that for each $i \in \omega$, ${\tau}_{i} \notin {\T}^{\alpha}$, and therefore, for each $\beta < \alpha$ and $m \in \omega$, $\eta({\vec{D}}^{\alpha}(n)) \neq \eta({a}_{\beta})$, and $\eta({\vec{D}}^{\alpha}(n)) \neq \eta({d}^{\beta}_{m})$. Also, since ${\tau}_{i} \neq {\tau}_{j}$ whenever $i \neq j$, we have that $\eta({\vec{D}}^{\alpha}(n)) \neq \eta({\vec{D}}^{\alpha}(m))$ whenever $n \neq m$. And similarly, since $\eta({a}_{\alpha})$ is not in ${\T}^{\alpha} \cup \{{\tau}_{n} \restrict \delta: n < \omega \wedge \delta \leq \dom{({\tau}_{n})}\}$, we have that $\eta({a}_{\alpha}) \neq \eta({\vec{D}}^{\alpha}(n))$, for any $n \in \omega$, and also that for any $\beta < \alpha$ and $m \in \omega$, $\eta({a}_{\alpha}) \neq \eta({a}_{\beta})$, and $\eta({a}_{\alpha}) \neq \eta({d}^{\beta}_{m})$. Therefore, we may set
\begin{align*}
	{\T}_{\alpha} = {\T}^{\alpha} \cup \{{\tau}_{{k}_{n}} \restrict \delta: n < \omega \wedge \delta \leq \dom{({\tau}_{{k}_{n}})}\} \cup\{\tau \restrict \delta: \delta \leq \dom{(\tau)}\}. \tag{${\ast}_{4}$}
\end{align*}
	It only remains to be seen that ${a}_{\alpha} \cap {a}_{\beta}$ is finite for each $\beta < \alpha$. Fix $\beta < \alpha$. There are two cases to consider. Suppose first that either $\eta({a}_{\beta}) \subsetneq \tau$ or that there is an $m \in \omega$ so that $\eta({d}^{\beta}_{m}) \subsetneq \tau$. In this case, ${f}_{\delta}$ is defined as above, and $\exists  k \in \omega \forall n \geq k \[f(n) > {f}_{\delta}(n) = \max{(\vec{E}(n) \cap {a}_{\beta})}\]$. It follows that ${a}_{\alpha} \cap {a}_{\beta} \subset {a}_{\beta} \cap \left({\bigcup}_{n < k}{{\vec{D}}^{\alpha}(n)}\right)$, which is finite.

Now, suppose that for every $\delta < \gamma$, $\eta({a}_{\beta}) \neq \tau \restrict \delta$, and also that for every $m \in \omega$ and every $\delta < \gamma$, $\eta({d}^{\beta}_{m}) \neq \tau \restrict \delta$. Since $\tau \notin {\T}^{\alpha}$, it follows that $\tau \not\subset \eta({a}_{\beta})$, and also that for each $m \in \omega$, $\tau \not\subset \eta({d}^{\beta}_{m})$. Therefore, there is a $\delta < \min\{\gamma, \dom{(\eta({a}_{\beta}))}\}$ such that $\tau(\delta) \neq \eta({a}_{\beta})(\delta)$, as well as ${\delta}_{m} < \min\{\gamma, \dom{(\eta({d}^{\beta}_{m}))}\}$ such that $\tau({\delta}_{m}) \neq \eta({d}^{\beta}_{m})({\delta}_{m})$, for each $m \in \omega$. Hence there are ${k}_{\alpha} \in \omega$ and ${k}_{\beta} \in \omega$ such that $\forall n \geq {k}_{\beta}\[{d}^{\beta}_{n} \subset {e}^{1 - \tau(\delta)}_{{\delta}}\]$ and $\forall n \geq {k}_{\alpha}\[{\vec{D}}^{\alpha}(n) \subset {e}^{\tau(\delta)}_{{\delta}}\]$. Put $d = {\bigcup}_{n \geq {k}_{\beta}}{{d}^{\beta}_{n}}$. Notice that ${a}_{\alpha} \cap d = \left( {\bigcup}_{n < {k}_{\alpha}}{\left({\vec{D}}^{\alpha}(n) \cap d\right)} \right) \cup \left( {\bigcup}_{n \geq {k}_{\alpha}}{\left({\vec{D}}^{\alpha}(n) \cap d\right)} \right)$, and this is finite because ${\vec{D}}^{\alpha}(n) \cap d = 0$ when $n \geq {k}_{\alpha}$, and ${\vec{D}}^{\alpha}(n) \cap d$ is finite for all $n \in \omega$ since ${\vec{D}}^{\alpha}(n)$ is a.d.\ from ${a}_{\beta}$. So it suffices to show that ${a}_{\alpha} \cap \left( {\bigcup}_{n < {k}_{\beta}}{{d}^{\beta}_{n}}\right)$ is finite, and for this it is enough to show that ${a}_{\alpha} \cap {d}^{\beta}_{n}$ is finite for every $n \in \omega$. To see this, fix $n \in \omega$. By assumption, there is a $k \in \omega$ such that $\forall m \geq k \[{\vec{D}}^{\alpha}(m) \subset {e}^{\tau({\delta}_{n})}_{{\delta}_{n}}\]$, while ${d}^{\beta}_{n} \; {\subset}^{\ast} \; {e}^{1 - \tau({\delta}_{n})}_{{\delta}_{n}}$. It follows that $ {d}^{\beta}_{n} \cap {a}_{\alpha} \subset \left( {\bigcup}_{m < k}{\left( {d}^{\beta}_{n} \cap {\vec{D}}^{\alpha}(m) \right)} \right) \cup \left( {d}^{\beta}_{n} \cap {e}^{\tau({\delta}_{n})}_{{\delta}_{n}}\right)$, which is finite because ${\vec{D}}^{\alpha}(m)$ is a.d.\ from ${a}_{\beta}$, and hence from ${d}^{\beta}_{n}$.  
\end{proof}
\section{Using PCF type assumptions} \label{sec:main2}
In this section, we show that ${\s}_{\omega, \omega}$ can be replaced in Theorem \ref{thm:main} by $\s$ in the presence of a relatively weak PCF type hypothesis. This hypothesis is only needed when $\s = \b$ -- when $\s < \b$ we get a $\ZFC$ result. In fact, we are able to show that when $\s < \b$, $\s = {\s}_{\omega, \omega}$, so Theorem \ref{thm:main} can be directly applied. This gives us an exact analogue of case 1 of Shelah's construction, where he gets a completely separable MAD family from $\s < \a$ without further hypotheses.

	When $\s = \b$ we seem to need a slightly stronger hypothesis than the one used by Shelah. For his construction Shelah uses the following:
\begin{Def} \label{def:ukk}
For a cardinal $\kappa > \omega$, $U(\kappa)$ is the following principle. There is a sequence $\langle {u}_{\alpha}: \omega \leq \alpha < \kappa \rangle$ such that
	\begin{enumerate}
		\item
			${u}_{\alpha} \subset \alpha$ and $\lc {u}_{\alpha} \rc = \omega$	
		\item
			$\forall X \in {\[\kappa\]}^{\kappa} \exists \omega \leq \alpha < \kappa\[\lc {u}_{\alpha} \cap X \rc = \omega\]$.
	\end{enumerate}
\end{Def}
It is easily seen that $U(\kappa)$ holds whenever $\kappa < {\aleph}_{\omega}$, and more generally whenever $\cf{(\langle {\[\kappa\]}^{\omega}, \subset \rangle)} = \kappa$. Shelah~\cite{Sh:935} (see Section 2) showed that if $\kappa = \s = \a$ and $U(\kappa)$ holds, then there is there is a completely separable MAD family. Our result will use the principle $P(\kappa)$ given below. But we first dispose of the easy case -- i.e.\ $\s < \b$.    
\begin{Theorem} \label{thm:whenbisbig}
	If $\s < \b$, then $\s = {\s}_{\omega, \omega}$. So there is a weakly tight family of size $\c$ under $\s < \b$.
\end{Theorem}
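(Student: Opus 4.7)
The plan is to establish $\s \leq {\s}_{\omega, \omega}$ in $\ZFC$ and ${\s}_{\omega, \omega} \leq \s$ under the hypothesis $\s < \b$; combined, these yield $\s = {\s}_{\omega, \omega} \leq \b$, whence Theorem \ref{thm:main} delivers the promised weakly tight family of size $\c$. The inequality $\s \leq {\s}_{\omega, \omega}$ is immediate: any family $\{{e}_{\alpha} : \alpha < \kappa\}$ witnessing ${\s}_{\omega, \omega} = \kappa$ automatically splits every $b \in \cube$, as can be seen by feeding the constant sequence ${b}_{n} = b$ into Definition \ref{def:somegaomega}.

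For the reverse direction I would fix a splitting family $\{{e}_{\alpha} : \alpha < \s\}$ and argue that this same family already witnesses ${\s}_{\omega, \omega} \leq \s$. Suppose instead that some $\{{b}_{n} : n \in \omega\} \subset \cube$ defeats it. Unpacking the negation, for every $\alpha < \s$ at least one of $\forallbutfin n \[\lc {b}_{n} \cap {e}_{\alpha} \rc < \omega\]$ and $\forallbutfin n \[\lc {b}_{n} \cap {\bar{e}}_{\alpha} \rc < \omega\]$ must hold; since each ${b}_{n}$ is infinite, these cannot both hold, so there is a unique $i(\alpha) \in 2$ with $\forallbutfin n \[{b}_{n} \; {\subset}^{\ast} \; {e}^{i(\alpha)}_{\alpha}\]$. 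Associated to this choice I would define ${h}_{\alpha} \in \BS$ by letting ${h}_{\alpha}(n)$ be the least $k$ such that ${b}_{n} \setminus k \subset {e}^{i(\alpha)}_{\alpha}$ (and ${h}_{\alpha}(n) = 0$ when no such $k$ exists).

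Because $\s < \b$, the family $\{{h}_{\alpha} : \alpha < \s\}$ is bounded, so pick a strictly increasing $h \in \BS$ with ${h}_{\alpha} \; {<}^{\ast} \; h$ for every $\alpha < \s$. Recursively choose ${c}_{n} \in {b}_{n}$ with ${c}_{0} < {c}_{1} < \dotsb$ and ${c}_{n} \geq h(n)$ (possible since each ${b}_{n}$ is infinite), and put $c = \{{c}_{n} : n \in \omega\} \in \cube$. Since $\{{e}_{\alpha}\}$ is a splitting family, fix $\alpha$ with both $c \cap {e}_{\alpha}$ and $c \cap {\bar{e}}_{\alpha}$ infinite. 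But for all sufficiently large $n$, ${c}_{n} \geq h(n) > {h}_{\alpha}(n)$, so ${c}_{n} \in {b}_{n} \setminus {h}_{\alpha}(n) \subset {e}^{i(\alpha)}_{\alpha}$; hence $c \; {\subset}^{\ast} \; {e}^{i(\alpha)}_{\alpha}$, which makes $c \cap {e}^{1 - i(\alpha)}_{\alpha}$ finite and contradicts that ${e}_{\alpha}$ splits $c$.

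The only step that requires a little care is the extraction of the bit $i(\alpha)$ and the function ${h}_{\alpha}$ from the failure of Definition \ref{def:somegaomega} at $\alpha$, for which the crucial observation is that the two disjuncts in the negated clause cannot both hold once every ${b}_{n}$ is infinite. Beyond that, the proof is the standard bounding-and-diagonalisation move already used in the proof of Lemma \ref{lem:Ietasplitting}.
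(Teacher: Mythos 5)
Your proof is correct and follows essentially the same route as the paper's: bound the functions $h_\alpha$ (the paper's $f_\alpha$) coding where $\{b_n\}$ has settled onto one side of each $e_\alpha$, use the bound to build a $c$ that diagonalises past all of them, and derive a contradiction from the fact that the splitting family must split $c$. The only cosmetic difference is that the paper first shrinks the $b_n$ to be pairwise disjoint and then picks $l_n \in b_n$ with $l_n \geq f(n)$, whereas you avoid the disjointification by directly selecting a strictly increasing sequence $c_n$; both devices serve the same purpose of making $c$ infinite.
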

\begin{proof}
	Let $\langle {e}_{\alpha}: \alpha < \kappa \rangle$ witness that $\kappa = \s$. Suppose $\{{b}_{n}: n \in \omega \} \subset \cube$ is a countable collection such that $\forall \alpha < \kappa \exists i \in 2 \forallbutfin n \in \omega \[{b}_{n} \; {\subset}^{\ast} \; {e}^{i}_{\alpha}\]$. By shrinking them if necessary we may assume that ${b}_{n} \cap {b}_{m} = 0$ whenever $n \neq m$. Now, for each $\alpha < \kappa$ define ${f}_{\alpha} \in \BS$ as follows. We know that there is a unique ${i}_{\alpha} \in 2$ such that there is a ${k}_{\alpha} \in \omega$ such that $\forall n \geq {k}_{\alpha} \[\lc {b}_{n} \cap {e}^{{i}_{\alpha}}_{\alpha} \rc < \omega \]$. We define ${f}_{\alpha}(n) = \max\left( {b}_{n} \cap {e}^{{i}_{\alpha}}_{\alpha} \right)$ if $n \geq {k}_{\alpha}$, and ${f}_{\alpha}(n) = 0$ if $n < {k}_{\alpha}$. As $\kappa < \b$, there is a $f \in \BS$ with $f \; {}^{\ast}{>} \; {f}_{\alpha}$ for each $\alpha < \kappa$. Now, for each $n \in \omega$, choose ${l}_{n} \in {b}_{n}$ with ${l}_{n} \geq f(n)$. Since the ${b}_{n}$ are pairwise disjoint, $c = \{{l}_{n}: n \in \omega\} \in \cube$. So by definition of $\s$, there is $\alpha < \kappa$ such that $\lc c \cap {e}^{0}_{\alpha} \rc = \lc c \cap {e}^{1}_{\alpha} \rc = \omega$. In particular, $c \cap {e}^{{i}_{\alpha}}_{\alpha}$ is infinite. But we know that there is an ${m}_{\alpha} \in \omega$ such that $\forall n \geq {m}_{\alpha}\[{f}_{\alpha}(n) < f(n)\]$. So there exists $n \geq \max\{{m}_{\alpha}, {k}_{\alpha}\}$ with ${l}_{n} \in {b}_{n} \cap {e}^{{i}_{\alpha}}_{\alpha}$. But this is a contradiction because ${l}_{n} \leq {f}_{\alpha}(n) < f(n)$.
\end{proof}
\begin{Def} \label{def:uk}
For a cardinal $\kappa > \omega$, $P(k)$ is the following principle. There is a sequence $\langle {u}_{\alpha}: \omega \leq \alpha < \kappa \rangle$ such that
	\begin{enumerate}
		\item
			${u}_{\alpha} \subset \alpha$ and $\lc {u}_{\alpha} \rc = \omega$
		\item
			$\forall \{{X}_{n}: n \in \omega \} \subset {\[\kappa\]}^{\kappa} \exists \omega \leq \alpha < \kappa \existsinf n \in \omega \[{u}_{\alpha} \cap {X}_{n} \neq 0 \]$.
	\end{enumerate} 
\end{Def}
Again, it is easy to see that $P(\kappa)$ hold whenever $\cf{(\langle {\[\kappa\]}^{\omega}, \subset \rangle)} = \kappa$. Also, it is clear that $P(\kappa) \implies U(\kappa)$. We don't know whether these principles are different. We also do not know of a model where $\kappa = \s = \b$ and $P(\kappa)$ fails. Similarly, it is not known whether $U(\kappa)$ can fail when $\kappa = \s = \a$, which is the hypothesis relevant to case 2 of Shelah's construction.
	
	The next lemma is well known and fairly standard. It allows us to assume that the order type of each ${u}_{\alpha}$ is $\omega$, and plays an important role in the construction below. We include a proof for the reader's convenience.   
\begin{Lemma}\label{lem:typeomega}
	Suppose $\b \leq \kappa$ and $P(\kappa)$ holds. Then there is a family $\langle {u}_{\alpha}: \omega \leq \alpha < \kappa \rangle$ as in Definition \ref{def:uk} with $\otp{({u}_{\alpha})} = \omega$, for each $\omega \leq \alpha < \kappa$. 
\end{Lemma}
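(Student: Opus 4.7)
The plan is to start from any witness $\langle u_\alpha : \omega \leq \alpha < \kappa\rangle$ for $P(\kappa)$ and manufacture, for each $\omega \leq \alpha < \kappa$, a subset $v_\alpha \subseteq u_\alpha$ of order type exactly $\omega$; then to argue that the new family $\langle v_\alpha\rangle$ continues to satisfy clause (2) of Definition \ref{def:uk}. The existence of $v_\alpha$ is elementary: $\otp(u_\alpha)$ is a countable ordinal, which I would write as $\delta_\alpha + n_\alpha$ with $\delta_\alpha$ either $0$ or a countable limit and $n_\alpha < \omega$. Discarding the $n_\alpha$ trailing maxima of $u_\alpha$ yields a subset of limit order type $\delta_\alpha \geq \omega$, and since $\cf(\delta_\alpha) = \omega$ for every countable limit ordinal, there is a strictly increasing cofinal $\omega$-subsequence $v_\alpha$ inside it. This $v_\alpha$ satisfies $v_\alpha \subseteq u_\alpha \subseteq \alpha$, $\lc v_\alpha \rc = \omega$, and $\otp(v_\alpha) = \omega$, as required by the definition.

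For the verification, I would fix $\{X_n\}_{n \in \omega} \subseteq {\[\kappa\]}^{\kappa}$ and seek $\alpha$ with $v_\alpha \cap X_n \neq \emptyset$ for infinitely many $n$. Applying $P(\kappa)$ to the original $\langle u_\alpha\rangle$ produces $\alpha^*$ and an infinite $I \subseteq \omega$ together with witnesses $\gamma_n \in u_{\alpha^*} \cap X_n$ for each $n \in I$. The easy case is that infinitely many $\gamma_n$ already lie in $v_{\alpha^*}$, and we are done immediately. The delicate case is when, for cofinitely many $n \in I$, the witness $\gamma_n$ sits in the discarded tail $u_{\alpha^*} \setminus v_{\alpha^*}$, so $v_{\alpha^*}$ itself fails to meet infinitely many of the $X_n$'s. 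This is where $\b \leq \kappa$ is needed: I anticipate using it to dominate a system of functions in $\BS$ recording, for each $\alpha$, how the discarded tail of $u_\alpha$ is situated relative to the bijective enumeration of $u_\alpha$. The resulting bound should allow a coherent, simultaneous redefinition of the $v_\alpha$'s (or a reselection of $\alpha$) ensuring that the tails cannot systematically avoid $v_\alpha$ across all countable collections $\{X_n\}$.

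The main obstacle is precisely this verification step, since the extraction of $v_\alpha$ is routine. The combinatorics of $\b$ — that any family of strictly fewer than $\b$ functions in $\BS$ is bounded modulo finite — is the sole lever tying the hypothesis $\b \leq \kappa$ to the preservation of clause (2), and I expect the detailed argument to be a standard diagonalisation against a $\b$-scale controlling the positions of the discarded elements in the enumerations of the $u_\alpha$'s.
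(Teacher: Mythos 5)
Your extraction of a single cofinal $\omega$-type subset $v_\alpha \subseteq u_\alpha$ cannot be made to work, and the gap is structural rather than a matter of finding the right $\b$-argument. Consider a $u_{\alpha^*}$ of order type $\omega+\omega$, written as a ``head'' $H$ of type $\omega$ followed by a ``tail'' $T$ of type $\omega$. Any cofinal $v_{\alpha^*}$ is almost contained in $T$. Now take a countable collection $\{X_n\}$ of $\kappa$-sized sets that all meet $u_{\alpha^*}$ inside $H$ and all avoid $T$ (and avoid every other $u_\beta$, which is possible since $\{u_\beta\}$ has size $\kappa$ while each $X_n$ has size $\kappa$ and can be pruned freely). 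Then $u_{\alpha^*}$ witnesses $P(\kappa)$ for this collection, but $v_{\alpha^*}$ meets no $X_n$, and there need be no alternative index to ``reselect.'' The hoped-for $\b$-scale cannot rescue this: the family $\langle v_\alpha\rangle$ must be fixed before the universal quantifier over $\{X_n\}$, so no single choice of tails can anticipate an adversary who, for each $\alpha$, targets exactly the portion of $u_\alpha$ that $v_\alpha$ discards. Your own ``delicate case'' is precisely this situation, and it is fatal, not delicate.

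The paper avoids this by replacing each $u_\alpha$ not with one $\omega$-type set but with $\kappa$-many of them — a family $\langle y^\alpha_\gamma : \gamma < \kappa\rangle$ of subsets of $u_\alpha$, each of order type $\omega$, with the key closure property that every \emph{infinite} $x \subseteq u_\alpha$ has \emph{infinite} intersection with some $y^\alpha_\gamma$. Since $\kappa\cdot\kappa=\kappa$, reindexing these into a single family $\langle v_\beta : \omega\le\beta<\kappa\rangle$ with $v_\beta\subseteq\beta$ is routine. The verification of clause (2) then picks, for $n$ in the infinite set $I$ given by $P(\kappa)$ applied to the original $\langle u_\alpha\rangle$, witnesses $\gamma_n\in u_{\alpha^*}\cap X_n$; when these form an infinite subset of $u_{\alpha^*}$, the closure property hands you a single $y^{\alpha^*}_\gamma$ meeting infinitely many $X_n$. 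The family $\langle y^\alpha_\gamma\rangle$ is built by induction on $\otp(u_\alpha)$: trivial at $\omega$, a one-set augmentation at successor-of-limit type $\delta+\omega$, and — this is where $\b\le\kappa$ finally enters — at limit-of-limit type, the blocks are handled inductively and an unbounded family of partial functions in $\BS$ is used to diagonalize sets $y'_\alpha$ that interleave across the blocks, catching any infinite $x$ that is spread thin over them. In short: $\b$ is used to build enough $\omega$-type subsets of each $u_\alpha$, not to dominate the positions of a single chosen $v_\alpha$. Your proposal is missing the entire idea of multiplying each $u_\alpha$ into $\kappa$-many refinements.
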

\begin{proof}
	It is sufficient to show that for each set $y \subset \kappa$ with $\lc y \rc = \omega$ there is a family $\langle {y}_{\gamma}: \gamma < \kappa \rangle$ with
	\begin{enumerate}
		\item[(a)]
			${y}_{\gamma} \subset y$ and $\otp{({y}_{\gamma})} = \omega$
		\item[(b)]
			$\forall x \in {\[ y \]}^{\omega} \exists \gamma < \kappa \[\lc x \cap {y}_{\gamma} \rc = \omega\]$.
	\end{enumerate}
Clearly, we may assume that $\otp{(y)}$ is a limit ordinal. We will prove this claim by induction on $\otp{(y)}$. If $\otp{(y)} = \omega$, then there is nothing to do. For any $\xi < \otp{(y)}$, let $y(\xi)$ denote the $\xi$th element of $y$. If $\otp{(y)} = \delta + \omega$ for some limit $\delta$, then let $z = \{y(\xi): \xi < \delta\}$ and let $\langle {z}_{\gamma}: \gamma < \kappa \rangle$ be a family satisfying (a) and (b) with respect to $z$. Now, simply let $\langle {y}_{\gamma} : \gamma < \kappa \rangle$ be an enumeration of $\{ \{y(\delta + n): n < \omega\} \} \cup \{{z}_{\gamma}: \gamma < \kappa \}$. Next, suppose that $\otp{(y)}$ is a limit of limits. Let $\langle {\delta}_{n} : n \in \omega \rangle$ be an increasing sequence of limit ordinals converging to $\delta = \otp{(y)}$. Put ${z}_{n} = \{y(\xi): {\delta}_{n - 1} \leq \xi < {\delta}_{n}\}$, where ${\delta}_{-1}$ is taken to be 0. Let $\langle {z}^{n}_{\gamma}: \gamma < \kappa \rangle$ be a family satisfying (a) and (b) with respect to ${z}_{n}$. Now, let $\langle {f}_{\alpha}: \alpha < \b \rangle$ be a family in $\BS$ which is unbounded with respect to infinite partial functions from $\omega$ to $\omega$, and let $\{{\zeta}^{n}_{i}: i \in \omega \}$ be an enumeration of ${z}_{n}$. For each $\alpha < \b$, define a set ${y'}_{\alpha} = \{{\zeta}^{n}_{i}: i \leq {f}_{\alpha}(n)\}$. Notice that $\otp{({y'}_{\alpha})} = \omega$. Let $\langle {y}_{\gamma}: \gamma < \kappa \rangle$ enumerate $\left( {\bigcup}_{n \in \omega}{\{{z}^{n}_{\gamma}: \gamma < \kappa \}} \right) \cup \{{y'}_{\alpha}: \alpha < \b \}$. We check that this family satisfies (b) with respect to $y$. Fix $x \in {\[y\]}^{\omega}$. If $x \cap {z}_{n}$ is infinite for some $n \in \omega$, then there is a $\gamma < \kappa$ so that $\lc x \cap {z}^{n}_{\gamma} \rc =\omega$. On the other hand, if $x \cap {z}_{n}$ is finite for each $n \in \omega$, then $\existsinf n \in \omega \[x \cap {z}_{n} \neq 0 \]$. So we may pick a strictly increasing sequence $\langle {k}_{n} : n \in \omega \rangle \subset \omega$ and $\{{i}_{{k}_{n}}: n \in \omega \} \subset \omega$ such that ${\zeta}^{{k}_{n}}_{{i}_{{k}_{n}}} \in x$ for each $n \in \omega$. There is an $\alpha < \kappa$ so that $\existsinf n \in \omega \[{f}_{\alpha}({k}_{n}) \geq {i}_{{k}_{n}}\]$. Now, it is clear that $\lc x \cap {y'}_{\alpha} \rc = \omega$. 
\end{proof}
\begin{Theorem} \label{thm:main2}
Assume $\kappa = \s = \b$ and that $P(\kappa)$ holds. Then there is a weakly tight family of size $\c$. In particular, such families exist if $\s \leq \b < {\aleph}_{\omega}$, and in particular, when $\s = {\omega}_{1}$. 
\end{Theorem}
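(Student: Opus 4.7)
The proof follows the blueprint of Theorem \ref{thm:main}, recasting each appeal to the defining property of ${\s}_{\omega, \omega}$ as an application of $P(\kappa)$ combined with an easy counting fact. Fix a splitting family $\langle {e}_{\alpha}: \alpha < \kappa \rangle$ witnessing $\s = \kappa$. Observe that for every $c \in \cube$, the set $Y(c) = \{\delta < \kappa : {e}_{\delta} \text{ splits } c\}$ has size $\kappa$; indeed if $\lc Y(c) \rc < \kappa$, then transferring $\{{e}_{\delta} \cap c : \delta \in Y(c)\}$ across a bijection between $c$ and $\omega$ would produce a splitting family of size strictly below $\s$, a contradiction. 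Also fix $\langle {u}_{\alpha}: \omega \leq \alpha < \kappa \rangle$ witnessing $P(\kappa)$ with $\otp{({u}_{\alpha})} = \omega$ for each $\omega \leq \alpha < \kappa$, as provided by Lemma \ref{lem:typeomega}.

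The central new ingredient is the following sequence-splitting lemma, which plays the role in Section \ref{sec:main2} that the defining property of ${\s}_{\omega, \omega}$ plays in Section \ref{sec:main}: for any sequence $\langle {c}_{n}: n \in \omega \rangle$ of pairwise disjoint infinite subsets of $\omega$, there exists $\delta < \kappa$ with $\existsinf n \in \omega \[\lc {c}_{n} \cap {e}_{\delta} \rc = \omega\]$ and $\existsinf n \in \omega \[\lc {c}_{n} \cap {\bar{e}}_{\delta} \rc = \omega\]$. To prove it, set ${X}_{n} = Y({c}_{n})$ for each $n$, so that $\lc {X}_{n} \rc = \kappa$ by the observation above. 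Applying $P(\kappa)$ to $\langle {X}_{n}: n \in \omega \rangle$ yields some $\omega \leq \alpha < \kappa$ with ${u}_{\alpha} \cap {X}_{n} \neq 0$ for infinitely many $n$, and since $\lc {u}_{\alpha} \rc = \omega$, the pigeonhole principle produces a single $\delta \in {u}_{\alpha}$ lying in ${X}_{n}$ for infinitely many $n$. This $\delta$ is as required.

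With the sequence-splitting lemma in hand, the analogues of Lemmas \ref{lem:positivesplitting}, \ref{lem:existsperfecttree}, and \ref{lem:Ietasplitting} go through exactly as in Section \ref{sec:main}, substituting it for each use of the defining property of ${\s}_{\omega, \omega}$. The main construction of $\{{a}_{\alpha}: \alpha < \c\}$, the subtrees ${\T}_{\alpha}$ of ${2}^{< \kappa}$, and the sequences of columns ${\vec{D}}^{\alpha}$ is then identical to that in the proof of Theorem \ref{thm:main}, including the verifications that ${a}_{\alpha}$ is a.d.\ from every previous ${a}_{\beta}$ and has infinite intersection with infinitely many ${b}_{n}$. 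Each $\b$-bounding step requires an ordinal $\gamma < \b$, which is automatic here: the relevant $\gamma$ is either a single ordinal below $\kappa$ produced by the sequence-splitting lemma or a countable supremum of such, and in either case $\gamma < \kappa = \b$ since $\b$ is regular and uncountable.

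The principal obstacle is the articulation of the sequence-splitting lemma itself; once one recognizes that $Y(c)$ always has full size $\kappa$, $P(\kappa)$ together with pigeonhole directly simulates ${\s}_{\omega, \omega}$ without any bookkeeping on the splitting family. The remaining clauses of the theorem follow in standard fashion: $P(\kappa)$ holds whenever $\cf{(\langle {\[\kappa\]}^{\omega}, \subset \rangle)} = \kappa$, in particular for every $\kappa < {\aleph}_{\omega}$; combining this with Theorem \ref{thm:whenbisbig} yields weakly tight families under $\s \leq \b < {\aleph}_{\omega}$, while the $\s = {\omega}_{1}$ case is immediate since $P({\omega}_{1})$ holds trivially.
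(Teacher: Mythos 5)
The core of your proposal is the ``sequence-splitting lemma'' asserting that for a fixed splitting family $\langle e_\alpha : \alpha < \kappa\rangle$ witnessing $\s = \kappa$, every sequence of pairwise disjoint infinite sets $\langle c_n\rangle$ admits a $\delta < \kappa$ with $\existsinf n\,[\lvert c_n \cap e_\delta\rvert = \omega]$ and $\existsinf n\,[\lvert c_n \cap \bar e_\delta\rvert = \omega]$. That claim, if provable from $\kappa=\s=\b$ and $P(\kappa)$, would essentially say that a fixed splitting family already behaves like an $\s_{\omega,\omega}$ family on disjoint sequences; the paper goes out of its way, in the remark following Lemma~\ref{lem:nocofinalbranch}, to note that this is precisely what the argument does \emph{not} establish and that proving it would settle $\s = \s_{\omega,\omega}$ under these hypotheses. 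So you should already be suspicious.

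The actual error is in the pigeonhole step. From $P(\kappa)$ you obtain $\alpha$ with $u_\alpha \cap X_n \neq \emptyset$ for infinitely many $n$, and you then conclude that since $\lvert u_\alpha\rvert = \omega$ there is a single $\delta \in u_\alpha$ lying in $X_n$ for infinitely many $n$. Pigeonhole gives that conclusion only when the ambient set is \emph{finite}. With $u_\alpha$ countably infinite, nothing prevents $u_\alpha \cap X_n$ from being a distinct singleton $\{\delta_n\}$ for each $n$ in the infinite set where the intersection is nonempty, so that no $\delta \in u_\alpha$ witnesses the splitting infinitely often. (The fact that each $X_n = Y(c_n)$ has size $\kappa$ does not help, since the small set $u_\alpha$ may still meet each $X_n$ in a single point.) This is not a repairable slip: it is exactly the obstacle that forces the paper to abandon a fixed splitting family $\langle e_\alpha\rangle$ and instead \emph{construct} a tree $\langle e_\eta : \eta \in 2^{<\kappa}\rangle$ by recursion, where for $\dom(\eta) = \gamma \in S_\alpha$ the set $e_\eta$ is assembled as $\bigcup_{i \in e_\gamma} c^\eta_i$ from pieces $c^\eta_i$ that record the behaviour of the branch below $\eta$ at the positions in $u_\alpha$. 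The proof of Lemma~\ref{lem:nocofinalbranch} then extracts increasing sequences $\langle k_m\rangle$ and $\langle i_m\rangle$ with $b_{k_m} \cap c^\eta_{i_m}$ infinite, and applies a splitting family not to the $b_n$'s directly but to the \emph{index set} $\{i_m : m\in\omega\}$, choosing $\gamma \in S_\alpha$ so that $e_\gamma$ splits $\{i_m\}$. Because the $c^\eta_i$ are pairwise disjoint, splitting this index set translates into the two required $\existsinf$ conclusions for $e_\eta$ and $\bar e_\eta$. Your proposal contains no analogue of this recursion and no analogue of the index-set splitting, and without them the theorem does not follow.

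Consequently the analogues of Lemmas~\ref{lem:positivesplitting}, \ref{lem:existsperfecttree}, and \ref{lem:Ietasplitting} do not ``go through exactly as in Section~\ref{sec:main}'': in the paper they must be reproved as Lemmas~\ref{lem:positivesplitting1} and \ref{lem:columnsplitting1}, with the splitting now occurring along branches of the tree and the ideals $\I_\eta$ replaced by the branch-indexed $J_\eta$ of Definition~\ref{def:jeta}. The final clauses of your write-up (that $P(\kappa)$ holds below $\aleph_\omega$, and combining with Theorem~\ref{thm:whenbisbig}) are fine, but the body of the proof has a genuine gap.
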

The proof of Theorem \ref{thm:main2} is very similar to the proof of Theorem \ref{thm:main}. The main difference will be that instead of using a sequence of sets $\langle {e}_{\alpha}: \alpha < \kappa \rangle$, we will construct a tree $\langle {e}_{\eta}: \eta \in {2}^{< \kappa} \rangle$. So the pair of sets $e$, $\bar{e}$ used at a node of the tree will now depend not just on the height of that node, but on all the pairs of sets that occur below that node. The idea is that along each cofinal branch $\psi$ of the tree, each countable collection of $\kappa$-sized subsets $\psi$ can be ``captured'' at some node $\eta$ that lies on $\psi$ using $P(\kappa)$. Then ${e}_{\eta}$ is chosen in such a way that for any $\{{b}_{n}: n \in \omega \} \subset \cube$, if $\{{X}_{n}: n \in \omega\}$ is the countable collection of $\kappa$-sized subsets of $\psi$ ``captured'' at $\eta$, where ${X}_{n}$ is the set of nodes on $\psi$ where ${b}_{n}$ ``hits the other side'', then $\existsinf n \in \omega \[\lc {b}_{n} \cap {e}^{1 - \psi(\dom{(\eta)})}_{\eta}\rc = \omega \]$. While the basic idea is the same as in cases 2 and 3 of Shelah's construction, there is one crucial difference here. An appropriate ${e}_{\eta}$ is chosen in Shelah's construction using a $\b$ family (quite similarly to what is done in Lemma \ref{lem:typeomega}), while we use an $\s$ family for this. If we could replace the $\s$ family in our construction by a $\b$ family, then we would also be able to prove the analogue of Shelah's case 3 -- i.e.\ we would be able to get a weakly tight family from $\b < \s < {\aleph}_{\omega}$. But we suspect that there are fundamental reasons for not being able to do this (see Conjecture \ref{c:noweaklytight}).
\begin{proof} [Proof of Theorem \ref{thm:main2}]
	First construct $\langle {e}_{\eta}: \eta \in {2}^{< \kappa} \rangle \subset \Pset(\omega)$ as follows. Let $\kappa = {\bigcup}_{\alpha < \kappa}{{S}_{\alpha}}$ be a partition of $\kappa$ so that $\lc {S}_{\alpha} \rc = \kappa$ and $\gamma \geq \alpha$ hold for each $\alpha < \kappa$ and $\gamma \in {S}_{\alpha}$. Let $\langle {u}_{\alpha}: \omega \leq \alpha < \kappa \rangle $ witness that $P(\kappa)$ holds. By Lemma \ref{lem:typeomega}, we may assume that $\otp{({u}_{\alpha})} = \omega$. Now, for each $\alpha < \kappa$, let $\langle {e}_{\gamma}: \gamma \in {S}_{\alpha} \rangle$ witness that $\kappa = \s$. We define ${e}_{\eta}$ by induction on $\dom{(\eta)}$. Assume $\dom{(\eta)} = \gamma < \kappa$, and that for each $\beta < \gamma$, ${e}_{\eta \restrict \beta} \subset \omega$ has been defined. Suppose $\gamma \in {S}_{\alpha}$. If $\alpha < \omega$, then let ${e}_{\eta} = {e}_{\gamma}$. If $\alpha \geq \omega$, we proceed as follows. Since ${u}_{\alpha}$ has order type $\omega$, enumerate it in strictly increasing order as ${u}_{\alpha} = \{{\xi}^{\alpha}_{i}: i < \omega\}$. Since $\gamma \geq \alpha > {\xi}^{\alpha}_{i}$, ${e}_{\eta \restrict {\xi}^{\alpha}_{i}}$ has already been defined. For each $i < \omega$, we put
\begin{align*}
	{c}^{\eta}_{i} = {e}^{1 - \eta({\xi}^{\alpha}_{i})}_{\eta \restrict {\xi}^{\alpha}_{i}} \cap \left( {\bigcap}_{j < i}{{e}^{\eta({\xi}^{\alpha}_{j})}_{\eta \restrict {\xi}^{\alpha}_{j}}}\right)
\end{align*} 
Notice that ${c}^{\eta}_{i} \cap {c}^{\eta}_{j} = 0$, for all $i \neq j$. We then define
\begin{align*}
	{e}_{\eta} = {\bigcup}_{i \in {e}_{\gamma}}{{c}^{\eta}_{i}} 
\end{align*}
This completes the definition of $\langle {e}_{\eta}: \eta \in {2}^{< \kappa} \rangle$. The next lemma establishes the key property of this family, which will give the analogues of Lemmas \ref{lem:positivesplitting}, \ref{lem:existsperfecttree}, and \ref{lem:Ietasplitting}.
\renewcommand{\qedsymbol}{}
\end{proof}
\vspace{-5mm}
\begin{Lemma} \label{lem:nocofinalbranch}
	Let $\{{b}_{n}: n \in \omega \} \subset \cube$, and let $\psi \in {2}^{\kappa}$. Then there is a $\gamma < \kappa$ such that $\existsinf n \in \omega \[\lc {b}_{n} \cap {e}^{1 - \psi(\gamma)}_{\psi \restrict \gamma}\rc = \omega \]$.
\end{Lemma}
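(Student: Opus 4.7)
The plan is to define, for each $n \in \omega$, the set $X_n = \{\beta < \kappa : \lc b_n \cap e^{1 - \psi(\beta)}_{\psi \restrict \beta} \rc = \omega\}$---the ``nodes on $\psi$ where $b_n$ hits the other side''---and to locate a single $\gamma < \kappa$ lying in $X_n$ for infinitely many $n$. I would split on whether infinitely many of the $X_n$ have size $\kappa$ or not.

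In the easy case, infinitely many $X_n$ have size $\kappa$. I would restrict attention to this subcollection and apply $P(\kappa)$ to obtain $\omega \leq \alpha < \kappa$ with $\existsinf n \[u_\alpha \cap X_n \neq 0\]$; since $u_\alpha$ is countable, pigeonhole then produces a single $\gamma \in u_\alpha$ lying in $X_n$ for infinitely many $n$, as required.

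In the harder case, cofinitely many $n$ have $\lc X_n \rc < \kappa$. By regularity of $\kappa = \b$, each such $X_n$ is bounded, so $\gamma^{\ast} := \sup_n \sup X_n < \kappa$; thus for every $\gamma > \gamma^{\ast}$ and $n$ in the cofinite set, $b_n \; {\subset}^{\ast}\; e^{\psi(\gamma)}_{\psi \restrict \gamma}$. I would pick $\alpha \geq \omega$ with $\alpha > \gamma^{\ast}$; for $\gamma \in S_\alpha$, recall $e_{\psi \restrict \gamma} = \bigcup_{i \in e_\gamma} c^\psi_{\alpha, i}$ with the columns $c^\psi_{\alpha, i}$ pairwise disjoint and depending only on $\psi \restrict u_\alpha$. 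Setting $I_n = \{i : b_n \cap c^\psi_{\alpha, i} \neq 0\}$, the almost-inclusion $b_n \; {\subset}^{\ast}\; e^{\psi(\gamma)}_{\psi \restrict \gamma}$ forces $I_n \; {\subset}^{\ast}\; e_\gamma$ or $I_n \; {\subset}^{\ast}\; \overline{e_\gamma}$ according to $\psi(\gamma)$, so an infinite $I_n$ would be unsplit by $\langle e_\gamma : \gamma \in S_\alpha\rangle$, contradicting the $\s$-splitting property of this family. Hence every $I_n$ is finite, and since $b_n$ is infinite it must either concentrate on a single column $c^\psi_{\alpha, i^{\ast}_n}$---whence the inclusion $c^\psi_{\alpha, i^{\ast}_n} \subset e^{1-\psi(\xi^\alpha_{i^{\ast}_n})}_{\psi \restrict \xi^\alpha_{i^{\ast}_n}}$ already places $\xi^\alpha_{i^{\ast}_n}$ in $X_n$---or concentrate on $A_\alpha := \omega \setminus \bigcup_i c^\psi_{\alpha, i}$. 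In the column subcase, a pigeonhole on $\{i^{\ast}_n\}$ either produces a repeated $i^{\ast}$ (giving $\gamma := \xi^\alpha_{i^{\ast}} \in X_n$ for infinitely many $n$) or yields an infinite set of distinct $i^{\ast}_n$'s, which is again unsplit by every $e_\gamma$ with $\gamma \in S_\alpha$, the same $\s$-contradiction as before.

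The main obstacle is the residual subcase in which infinitely many $b_n$ lie almost entirely in $A_\alpha$. The plan there would be to analyse $c_n := b_n \cap A_\alpha$ via a secondary application of a splitting family on a low-index strip $S_\beta$ with $\beta < \omega$ (so that $e_{\psi \restrict \gamma} = e_\gamma$), and to use the flexibility in the choice of $\alpha$ together with the $U(\kappa)$ consequence of $P(\kappa)$ to capture the relevant piece of $\psi$, finally threading one more pigeonhole/$\s$-splitting contradiction to extract a common $\gamma \leq \gamma^{\ast}$ in $X_n$ for infinitely many $n$. This is precisely the delicate point at which the combined $\s = \b$ and $P(\kappa)$ hypotheses of Theorem \ref{thm:main2} must be brought to bear in concert.
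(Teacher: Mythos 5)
The central gap is in your ``easy case'': $u_\alpha$ is a countably \emph{infinite} set, so knowing $\existsinf n \, [\, u_\alpha\cap X_n\neq 0 \,]$ does not yield, by pigeonhole, a single $\gamma\in u_\alpha$ lying in infinitely many $X_n$. (Take $u_\alpha = \{\xi_i : i\in\omega\}$ and $X_n\cap u_\alpha = \{\xi_n\}$: every intersection is nonempty but no single $\xi_i$ recurs.) Pigeonhole produces a common element only from a \emph{finite} index set. This is exactly the obstacle that the elaborate inductive definition of $\langle e_\eta : \eta\in 2^{<\kappa}\rangle$ is designed to overcome: rather than hunting for $\gamma$ \emph{inside} $u_\alpha$, the paper picks $\gamma\in S_\alpha$ (so $\gamma\geq\alpha$, above every $\xi^\alpha_i$), for which $e_{\psi\restrict\gamma}$ is assembled from the pairwise disjoint columns $c^{\psi\restrict\gamma}_i$ determined by the behaviour of $\psi$ along $u_\alpha$. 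One then extracts strictly increasing sequences $k_m, i_m$ with $\xi^\alpha_{i_m}\in X_{k_m}$ and $\xi^\alpha_i\notin X_{k_m}$ for all $i<i_m$, so that $b_{k_m}$ meets the column $c^{\psi\restrict\gamma}_{i_m}$ infinitely, and finally applies the splitting family $\langle e_\gamma : \gamma\in S_\alpha\rangle$ to the set $\{i_m : m\in\omega\}$ to locate the desired $\gamma$. Your hard-case analysis correctly spots the column decomposition of $e_{\psi\restrict\gamma}$ and the role of the $S_\alpha$-splitting family --- that is indeed the engine of the paper's argument --- but you are missing that this machinery, not pigeonhole, must also drive the ``easy case,'' since pigeonhole alone cannot tame an infinite $u_\alpha$.

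A secondary point: the case split is unnecessary. The paper shows $\lc X_n \rc = \kappa$ for \emph{every} $n$, using only $\s = \kappa$ and the splitting family $\langle e_\gamma : \gamma\in S_0\rangle$ (for which $e_{\psi\restrict\gamma} = e_\gamma$): if $\lc X_n \rc < \s$, pick $c\in[b_n]^\omega$ unsplit by $\{e_{\psi\restrict\gamma} : \gamma\in X_n\}$; some $\gamma\in S_0$ then splits $c$, which forces $\gamma\in X_n$ and hence $c$ unsplit by $e_{\psi\restrict\gamma}$, a contradiction. Consequently the bounded-$X_n$ picture of your hard case never occurs, and the ``residual subcase'' you flag as unresolved never has to be addressed --- the real work is the $k_m, i_m$ extraction and $S_\alpha$-splitting step that is absent from your easy case.
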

\begin{proof}
	Suppose not. Fix $\psi \in {2}^{\kappa}$ such that for all $\gamma < \kappa$, $\forallbutfin n \in \omega \[ {b}_{n} \; {\subset}^{\ast} \; {e}^{\psi(\gamma)}_{\psi \restrict \gamma}\]$. For each $n \in \omega$, define
	\begin{align*}
		{X}_{n} = \left\{\gamma < \kappa: \lc {b}_{n} \cap {e}^{1 - \psi(\gamma)}_{\psi \restrict \gamma} \rc = \omega \right\}.
	\end{align*} 
We claim that $\lc {X}_{n} \rc = \kappa$. Indeed, suppose, for a contradiction, that $\lc {X}_{n} \rc < \kappa$. Put $\F = \{{e}_{\psi \restrict \gamma}: \gamma \in {X}_{n} \}$. This is a family of subsets of $\omega$ of size less than $\kappa = \s$. So we may find a $c \in {\[{b}_{n}\]}^{\omega}$ such that for each $\gamma \in {X}_{n}$, there is an $i \in 2$ so that $c \; {\subset}^{\ast} \; {e}^{i}_{\psi \restrict \gamma}$. However, $\langle {e}_{\gamma}: \gamma \in {S}_{0}\rangle$ enumerates a splitting family. So there is a $\gamma \in {S}_{0}$ so that $\lc c \cap {e}^{0}_{\psi \restrict \gamma} \rc = \lc c \cap {e}^{1}_{\psi \restrict \gamma} \rc = \omega$. In particular, $\lc {b}_{n} \cap {e}^{1 - \psi(\gamma)}_{\psi \restrict \gamma} \rc = \omega$, and so $\gamma \in {X}_{n}$. But this is a contradiction because $c \; {\subset}^{\ast} \; {e}^{i}_{\psi \restrict \gamma}$.

	Now, choose $\omega \leq \alpha < \kappa$ such that $\existsinf n \in \omega \[{u}_{\alpha} \cap {X}_{n} \neq 0\]$. We choose two strictly increasing sequences $\langle {k}_{m}: m \in \omega \rangle \subset \omega$ and $\langle {i}_{m}: m \in \omega \rangle \subset \omega$ as follows. Let ${k}_{0}$ be the least $n \in \omega$ such that ${u}_{\alpha} \cap {X}_{n} \neq 0$, and let ${i}_{0}$ be the least $i \in \omega$ such that ${\xi}^{\alpha}_{i} \in {X}_{{k}_{0}}$. Suppose that ${k}_{m}$ and ${i}_{m}$ are given to us with ${\xi}^{\alpha}_{{i}_{m}} \in {X}_{{k}_{m}}$. Put
\begin{align*}
	s = \left\{ n \in \omega: \exists i \leq {i}_{m} \lc {b}_{n} \cap {e}^{1 - \psi({\xi}^{\alpha}_{i})}_{\psi \restrict {\xi}^{\alpha}_{i}} \rc = \omega\right\}.
\end{align*}
Since for each $i \leq {i}_{m}$, $\forallbutfin n \in \omega \[ {b}_{n} \; {\subset}^{\ast} \; {e}^{\psi({\xi}^{\alpha}_{i})}_{\psi \restrict {\xi}^{\alpha}_{i}}\]$, $s$ is a finite set. So we may choose ${k}_{m + 1} \in \omega$ such that ${u}_{\alpha} \cap {X}_{{k}_{m + 1}} \neq 0$ and such that ${k}_{m + 1} > n$ for all $n \in s$. Observe that since ${k}_{m} \in s$, and so ${k}_{m + 1} > {k}_{m}$. Now, ${i}_{m + 1}$ is defined to be the least $i \in \omega$ such that ${\xi}^{\alpha}_{i} \in {X}_{{k}_{m + 1}}$. Since ${k}_{m + 1} \notin s$, ${i}_{m + 1} > {i}_{m}$. Notice that each ${i}_{m}$ is defined so that ${\xi}^{\alpha}_{{i}_{m}} \in {X}_{{k}_{m}}$ and $\forall i < {i}_{m} \[{\xi}^{\alpha}_{i} \notin {X}_{{k}_{m}}\]$. It follows that for each $m \in \omega$
\begin{align*}
	\lc {b}_{{k}_{m}} \cap {e}^{1 - \psi({\xi}^{\alpha}_{{i}_{m}})}_{\psi \restrict {\xi}^{\alpha}_{{i}_{m}}} \cap \left( {\bigcap}_{i < {i}_{m}}{{e}^{\psi({\xi}^{\alpha}_{i})}_{\psi \restrict {\xi}^{\alpha}_{i}}} \right) \rc = \omega. \tag{${\ast}$}
\end{align*}     
Next, choose $\gamma \in {S}_{\alpha}$ such that $\existsinf m \in \omega \[{i}_{m} \in {e}^{0}_{\gamma}\]$ and $\existsinf m \in \omega \[{i}_{m} \in {e}^{1}_{\gamma}\]$. Note that $\gamma \geq \alpha$. Put $\eta = \psi \restrict \gamma$. It follows from $(\ast)$ that for each $m \in \omega$, $\lc {b}_{{k}_{m}} \cap {c}^{\eta}_{{i}_{m}} \rc = \omega$. Therefore, $\existsinf m \in \omega \[\lc {b}_{{k}_{m}} \cap  {e}^{0}_{\eta} \rc = \omega\]$. On the other hand, since ${c}^{\eta}_{i}$ and ${c}^{\eta}_{j}$ are disjoint whenever $i \neq j$, we also get $\existsinf m \in \omega \[\lc {b}_{{k}_{m}} \cap  {e}^{1}_{\eta} \rc = \omega\]$. But this contradicts our initial hypothesis about $\psi$, and we are done.
\end{proof}
Observe that Lemma \ref{lem:nocofinalbranch} is not saying that $\langle {e}_{\psi \restrict \gamma}: \gamma < \kappa \rangle$ is an ${\s}_{\omega, \omega}$ family, for each $\psi \in {2}^{\kappa}$. That would prove $\s = {\s}_{\omega, \omega}$, given $\kappa = \s = \b$ and $P(\kappa)$. For this, we would need $\gamma < \kappa$ so that $\existsinf n \in \omega \[\lc {b}_{n} \cap {e}^{1 - \psi(\gamma)}_{\psi \restrict \gamma}\rc = \omega\]$ and $\existsinf n \in \omega \[\lc {b}_{n} \cap {e}^{\psi(\gamma)}_{\psi \restrict \gamma}\rc = \omega\]$, which is not proved. But Lemma \ref{lem:nocofinalbranch} is still good enough for proving the following analogue of Lemma \ref{lem:positivesplitting}.   
\begin{Lemma} \label{lem:positivesplitting1}
	Let $\A \subset \cube$ be an a.d.\ family. Let $b \in {\I}^{+}(\A)$, and let $\eta \in {2}^{< \kappa}$. Assume that $\forall \beta < \dom{(\eta)} \[b \cap {e}^{1 - \eta(\beta)}_{\eta \restrict \beta} \notin {\I}^{+}(\A) \]$. Then there is a $\tau \in {2}^{< \kappa}$ with $\tau \supset \eta$ such that
\begin{enumerate}
	\item
		$\forall \beta < \dom{(\tau)}\[b \cap {e}^{1 - \tau(\beta)}_{\tau \restrict \beta} \notin {\I}^{+}(\A)\]$.
	\item
		$b \cap {e}^{0}_{\tau} \in {\I}^{+}(\A)$ and $b \cap {e}^{1}_{\tau} \in {\I}^{+}(\A)$.
\end{enumerate} 
\end{Lemma}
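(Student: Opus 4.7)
The natural plan is to build $\tau$ by transfinite recursion on ordinals $\gamma$, starting with $\tau_{\dom{(\eta)}} = \eta$, always maintaining condition~(1). At each stage $\gamma$, test condition~(2) at $\tau_\gamma$: if both $b \cap {e}^{0}_{\tau_\gamma}$ and $b \cap {e}^{1}_{\tau_\gamma}$ lie in ${\I}^{+}(\A)$, stop and set $\tau = \tau_\gamma$. Otherwise, since $b \in {\I}^{+}(\A)$ and $b = (b \cap {e}^{0}_{\tau_\gamma}) \cup (b \cap {e}^{1}_{\tau_\gamma})$, not both of these can lie in $\I(\A)$, so exactly one, say the $i$-side, belongs to ${\I}^{+}(\A)$ while the other lies in $\I(\A)$. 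Extend by setting $\tau_{\gamma + 1} = {\tau_\gamma}^{\frown}{\langle i \rangle}$; this preserves~(1) at the new coordinate because $1 - \tau_{\gamma + 1}(\gamma) = 1 - i$ picks out exactly the non-positive side. At limit stages $\lambda$, take $\tau_\lambda = \bigcup_{\gamma < \lambda}{\tau_\gamma}$, and note that (1) passes to unions automatically.

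The entire content of the proof now lies in showing that this recursion must halt at some $\gamma < \kappa$. If it does not, we obtain $\psi \in {2}^{\kappa}$ extending $\eta$ with $b \cap {e}^{1 - \psi(\gamma)}_{\psi \restrict \gamma} \notin {\I}^{+}(\A)$ for every $\gamma < \kappa$. To derive a contradiction I would split exactly as in Lemma~\ref{lem:positivesplitting}. If only finitely many $a \in \A$ satisfy $\lc b \cap a \rc = \omega$, subtracting them from $b$ produces $c \in {\[b\]}^{\omega}$ that is a.d.\ from every member of $\A$; applying Lemma~\ref{lem:nocofinalbranch} to $\psi$ with the constant sequence ${b}_{n} = c$ yields a $\gamma < \kappa$ with $\lc c \cap {e}^{1 - \psi(\gamma)}_{\psi \restrict \gamma} \rc = \omega$, and this infinite subset of $b$, being a.d.\ from $\A$, forces $b \cap {e}^{1 - \psi(\gamma)}_{\psi \restrict \gamma} \in {\I}^{+}(\A)$, a contradiction. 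Otherwise there is $\{{a}_{n} : n \in \omega\} \subset \A$ with $\lc b \cap {a}_{n} \rc = \omega$ for every $n$; set ${b}_{n} = b \cap {a}_{n}$ and apply Lemma~\ref{lem:nocofinalbranch} to $\psi$ and $\{{b}_{n} : n \in \omega\}$ to find $\gamma < \kappa$ with $\existsinf n \in \omega \[\lc {b}_{n} \cap {e}^{1 - \psi(\gamma)}_{\psi \restrict \gamma} \rc = \omega\]$, so that $b \cap {e}^{1 - \psi(\gamma)}_{\psi \restrict \gamma}$ has infinite intersection with infinitely many elements of $\A$ and is therefore positive, again contradicting the defining property of $\psi$.

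The main obstacle is thus entirely absorbed into Lemma~\ref{lem:nocofinalbranch}: its failure is exactly what would allow the recursion to produce an ``evil branch'' $\psi$ defeating the positivity of $b$. Once Lemma~\ref{lem:nocofinalbranch} is available, the extension step is a routine dichotomy, with $b \in {\I}^{+}(\A)$ supplying the witness for the next bit at every ordinal level in a manner directly analogous to the splitting argument in Lemma~\ref{lem:positivesplitting}.
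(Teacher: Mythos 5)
Your proof is correct and follows essentially the same path as the paper's: build $\tau$ by the obvious recursion, observe that failure to halt gives a cofinal branch $\psi$ contradicting Lemma~\ref{lem:nocofinalbranch}, and witness positivity by the same two-case split as in Lemma~\ref{lem:positivesplitting}. The paper compresses your two cases into a single abstract claim — there is a family $\{b_n\} \subset [b]^\omega$ such that any $c$ meeting infinitely many $b_n$ infinitely lies in $\I^+(\A)$ — but the proof of that claim is exactly your dichotomy, so the argument is the same.
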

\begin{proof}
	Suppose not. In other words, assume that for any $\tau \in {2}^{< \kappa}$, if $\tau \supset \eta$ and if $\forall \beta < \dom{(\tau)}\[b \cap {e}^{1 - \tau(\beta)}_{\tau \restrict \beta} \notin {\I}^{+}(\A)\]$, then there is an $i \in 2$ such that $b \cap {e}^{i}_{\tau} \notin {\I}^{+}(\A)$. This allows us to build a $\psi \in {2}^{\kappa}$ with $\eta \subset \psi$ and with the property that $\forall \beta < \kappa \[b \cap {e}^{1 - \psi(\beta)}_{\psi \restrict \beta} \notin {\I}^{+}(\A)\]$. Now, there exists a collection $\{{b}_{n}: n \in \omega\} \subset {\[b\]}^{\omega}$ with the property that for any $c \in \cube$, if $c$ has infinite intersection with infinitely many ${b}_{n}$, then $c \in {\I}^{+}(\A)$. Applying Lemma \ref{lem:nocofinalbranch} to $\psi$ and $\{{b}_{n}: n \in \omega\}$, we get a $\gamma < \kappa$ such that $\existsinf n \in \omega \[\lc {b}_{n} \cap {e}^{1 - \psi(\gamma)}_{\psi \restrict \gamma}\rc = \omega \]$. But since ${b}_{n} \subset b$, we have that $\existsinf n \in \omega \[\lc {b}_{n} \cap b \cap {e}^{1 - \psi(\gamma)}_{\psi \restrict \gamma}\rc = \omega \]$. It follows that $b \cap {e}^{1 - \psi(\gamma)}_{\psi \restrict \gamma} \in {\I}^{+}(\A)$, contradicting the way we constructed $\psi$.  
\end{proof}
The next definition specifies the analogue of ${\I}_{\eta}$ in the present context. It is simply the obvious modification of ${\I}_{\eta}$.
\begin{Def} \label{def:jeta}
	For any $\eta \in {2}^{< \kappa}$, we define
	\begin{align*}
		{J}_{\eta} = \left\{ \vec{C} \in \C: \forall \gamma < \dom{(\eta)} \forallbutfin n \in \omega \[\vec{C}(n) \subset {e}^{\eta(\gamma)}_{\eta \restrict \gamma}\]\right\}.
	\end{align*}
\end{Def}
The next lemma proves the analogue of Lemma \ref{lem:Ietasplitting}. That $\kappa =\b$ is important here.
\begin{Lemma}\label{lem:columnsplitting1}
	Let $\vec{C}$ be a sequence of columns and let $\eta \in {2}^{< \kappa}$. Assume $\vec{C} \in {J}_{\eta}$. Then there exists $\tau \in {2}^{< \kappa}$ with $\tau \supset \eta$ and $\vec{D} \prec \vec{C}$ such that
	\begin{enumerate}
		\item
			$\vec{D} \in {J}_{\tau}$
		\item
			$\existsinf n \in \omega \[\lc \vec{D}(n) \cap {e}^{0}_{\tau} \rc = \omega\]$ and $\existsinf n \in \omega \[\lc \vec{D}(n) \cap {e}^{1}_{\tau} \rc = \omega\]$ 
	\end{enumerate}
\end{Lemma}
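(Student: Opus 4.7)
My plan is to mirror Lemma~\ref{lem:Ietasplitting}, replacing the appeal to ${\s}_{\omega,\omega}$ with Lemma~\ref{lem:nocofinalbranch}. The idea is to greedily attempt to extend $\eta$ to a cofinal branch $\psi \in {2}^{\kappa}$ along which, at no level, both sides are hit infinitely often by infinitely many columns of $\vec{C}$; Lemma~\ref{lem:nocofinalbranch} will prevent this attempt from succeeding, so the recursion must halt at some $\tau \in {2}^{<\kappa}$ witnessing clause (2) for $\vec{C}$. A standard $\b$-bounding argument then shrinks $\vec{C}$ to produce the desired $\vec{D} \in {J}_{\tau}$.

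Concretely, I would build $\psi$ by recursion, setting $\psi \restrict \dom{(\eta)} = \eta$ and taking unions at limits. At a stage $\gamma \geq \dom{(\eta)}$ with $\psi \restrict \gamma$ already defined, let ${A}^{i}_{\gamma} = \{n \in \omega: \lc \vec{C}(n) \cap {e}^{i}_{\psi \restrict \gamma} \rc = \omega\}$ for $i \in 2$; since each $\vec{C}(n)$ is infinite, ${A}^{0}_{\gamma} \cup {A}^{1}_{\gamma} = \omega$. If both ${A}^{0}_{\gamma}$ and ${A}^{1}_{\gamma}$ are infinite, I halt and set $\tau = \psi \restrict \gamma$. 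Otherwise, exactly one of them, say ${A}^{1 - {i}_{\gamma}}_{\gamma}$, is finite, and I set $\psi(\gamma) = {i}_{\gamma}$. The recursion cannot run all the way to $\kappa$: if it did, the resulting $\psi \in {2}^{\kappa}$ would satisfy $\lc {A}^{1 - \psi(\gamma)}_{\gamma}\rc < \omega$ for every $\gamma < \kappa$, directly contradicting Lemma~\ref{lem:nocofinalbranch} applied to $\{{b}_{n}\} = \{\vec{C}(n): n \in \omega\}$ and this $\psi$. Hence the recursion halts at some $\gamma < \kappa$, yielding $\tau \in {2}^{<\kappa}$ with $\tau \supset \eta$, and clause (2) already holds for $\vec{C}$ at $\tau$ by the halting condition.

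The final step is to use $\kappa = \b$ to refine $\vec{C}$ into some $\vec{D} \in {J}_{\tau}$. For each $\beta < \gamma$, define ${f}_{\beta} \in \BS$ by setting ${f}_{\beta}(n) = 1 + \max{\left(\vec{C}(n) \cap {e}^{1 - \tau(\beta)}_{\tau \restrict \beta}\right)}$ whenever this intersection is finite -- which happens for cofinitely many $n$, because $\vec{C} \in {J}_{\eta}$ handles $\beta < \dom{(\eta)}$ and the finiteness of ${A}^{1 - \tau(\beta)}_{\beta}$ handles $\dom{(\eta)} \leq \beta < \gamma$ -- and ${f}_{\beta}(n) = 0$ otherwise. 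Since $\lc \gamma \rc < \kappa = \b$, pick $f \in \BS$ that eventually dominates each ${f}_{\beta}$, and set $\vec{D}(n) = \vec{C}(n) \setminus f(n)$. Then $\vec{D}(n) \; {=}^{\ast} \; \vec{C}(n)$, so $\vec{D}$ is a sequence of columns, $\vec{D} \prec \vec{C}$ via ${k}_{n} = n$, and clause (2) transfers from $\vec{C}$ to $\vec{D}$. For each $\beta < \gamma$ and each sufficiently large $n$, every element of $\vec{C}(n) \cap {e}^{1 - \tau(\beta)}_{\tau \restrict \beta}$ is strictly less than ${f}_{\beta}(n)$ and hence strictly less than $f(n)$, so $\vec{D}(n) \cap {e}^{1 - \tau(\beta)}_{\tau \restrict \beta} = 0$, giving $\vec{D} \in {J}_{\tau}$. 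The only nontrivial step is termination of the greedy recursion, which is precisely the role of Lemma~\ref{lem:nocofinalbranch}; the rest is a routine adaptation of the ${\s}_{\omega,\omega} \leq \b$ argument with $\b$-bounding supplying the shrunk columns.
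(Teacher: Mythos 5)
Your proof is correct and essentially mirrors the paper's, though you organize it as a direct greedy construction rather than a proof by contradiction: the paper negates the lemma and, at each stage of the $\psi$-construction, first produces an auxiliary $\vec{D} \prec \vec{C}$ with $\vec{D} \in {J}_{\psi\restrict\gamma}$ via $\b$-bounding so that the negated hypothesis can be applied, whereas you decide whether to halt or extend $\psi$ directly by inspecting $\vec{C}$ itself and defer the single $\b$-bounding step to the end. This is a modest streamlining, but the underlying combinatorics -- Lemma \ref{lem:nocofinalbranch} forcing the recursion to terminate, and a dominating function stripping the tails to land in ${J}_{\tau}$ -- is the same.
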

\begin{proof}
	Suppose not. In other words, for any $\tau \in {2}^{< \kappa}$, \emph{if} $\tau \supset \eta$, and \emph{if} there exists a $\vec{D} \prec \vec{C}$ with $\vec{D} \in {J}_{\tau}$, \emph{then} there is an $i \in 2$ such that $\forallbutfin n \in \omega \[\lc \vec{D}(n) \cap {e}^{i}_{\tau} \rc < \omega \]$. Now, construct a $\psi \in {2}^{\kappa}$ with the property that for each $\gamma < \kappa$,
\begin{align*}
	\forallbutfin n \in \omega \[\lc \vec{C}(n) \cap {e}^{1 - \psi(\gamma)}_{\psi \restrict \gamma} \rc < \omega \], \tag{${\ast}_{\gamma}$}
\end{align*}
contradicting Lemma \ref{lem:nocofinalbranch}. To see that this can be done, put $\psi \restrict \dom{(\eta)} = \eta$, and suppose that for some $\dom{(\eta)} \leq \gamma < \kappa$, $\psi\restrict \gamma$ has been defined so that $({\ast}_{\beta})$ holds for each $\beta < \gamma$. Since $\gamma < \kappa = \b$, we can find $\vec{D} \prec \vec{C}$ with $\vec{D} \in {J}_{\psi \restrict \gamma}$ and with the property that $\forall n \in \omega \[\vec{D}(n) {=}^{\ast} \vec{C}(n)\]$. So by the hypothesis there is $i \in 2$ so that $\forallbutfin n \in \omega \[\lc \vec{D}(n) \cap {e}^{i}_{\psi \restrict \gamma} \rc < \omega\]$.  But since $ \vec{D}(n) {=}^{\ast} \vec{C}(n)$ for all $n \in \omega$, if we set $\psi (\gamma) = 1 - i$, then $\psi$ will be as needed.   
\end{proof}
\begin{proof}[Proof of Theorem \ref{thm:main2} (continued)]
Armed with Lemmas \ref{lem:positivesplitting1}, \ref{lem:columnsplitting1}, proceed exactly as in Theorem \ref{thm:main}. At a stage $\alpha < \c$, ${\A}_{\alpha} = \langle {a}_{\beta}: \beta < \alpha \rangle$, $\langle {\T}_{\beta}: \beta < \alpha \rangle$, ${\T}^{\alpha} $, $\langle {\vec{D}}^{\beta}: \beta < \alpha \rangle$ are all exactly as before. Now the nodes $\eta({a}_{\beta})$ and $\eta({\vec{D}}^{\beta}(n))$ satisfy
 \begin{align*}
	& {\vec{D}}^{\beta} \in {J}_{\eta({a}_{\beta})}  \tag{${\dagger\dagger}_{{a}_{\beta}}$}\\ 
   	& \forall \gamma < \dom{({\eta}({\vec{D}}^{\beta}(n)))} \[{\vec{D}}^{\beta}(n) {\subset}^{\ast} {e}^{\eta({\vec{D}}^{\beta}(n))(\gamma)}_{\eta({\vec{D}}^{\beta}(n)) \restrict \gamma}\]. \tag{${\dagger\dagger}_{{\vec{D}}^{\beta}(n)}$}
    \end{align*}  
Given any $b \in {\I}^{+}({\A}_{\alpha})$, apply Lemma \ref{lem:positivesplitting1} to construct $\{{\sigma}_{s}: s \in {2}^{< \omega} \} \subset {2}^{< \kappa}$, $\{{b}_{s}: s \in {2}^{< \omega}\} \subset {\I}^{+}(\A)$, and $\{{\gamma}_{s}: s \in {2}^{< \omega}\} \subset \kappa$ such that
	\begin{enumerate}
		\item
			$\forall s \in {2}^{< \omega} \forall i \in 2 \[\dom{({\sigma}_{s})} = {\gamma}_{s} \wedge {\sigma}_{{s}^{\frown}{\langle i \rangle}} \supset {{\sigma}_{s}}^{\frown}{\langle i \rangle}\]$
		\item
			$\forall s \in {2}^{< \omega} \forall i \in 2 \forall \gamma < \dom{({\sigma}_{s})} \[{b}_{s} \cap {e}^{1 - {\sigma}_{s}(\gamma)}_{{\sigma}_{s} \restrict \gamma} \notin {\I}^{+}({\A}_{\alpha}) \wedge {b}_{{s}^{\frown}{\langle i \rangle}} = {b}_{s} \cap {e}^{i}_{{\sigma}_{s}}\]$
		\item
			${b}_{0} = b$ and $\forall s \in {2}^{< \omega} \[{b}_{s} \cap {e}^{0}_{{\sigma}_{s}} \in {\I}^{+}({\A}_{\alpha}) \wedge {b}_{s} \cap {e}^{1}_{{\sigma}_{s}} \in {\I}^{+}({\A}_{\alpha})\]$.
	\end{enumerate}  
If ${\T}^{\alpha} \subset \T$ is any subtree of ${2}^{< \kappa}$ with $\lc \T \rc < \c$, there is a $f \in {2}^{\omega}$ such that $\tau = {\bigcup}_{n \in \omega}{{\sigma}_{f \restrict n}} \notin \T$. Also, there is ${c}_{0} \in {\[b\]}^{\omega} \cap {\I}^{+}({\A}_{\alpha})$ such that ${c}_{0} \; {\subset}^{\ast} \; {b}_{f \restrict n}$ for all $n \in \omega$. Note that if $\delta < \gamma  = \sup\{{\gamma}_{f \restrict n}: n \in \omega\}$, then $\delta < {\gamma}_{f \restrict n}$ for some $n \in \omega$, and so by (2), ${b}_{f \restrict n} \cap {e}^{1 - \tau(\delta)}_{\tau \restrict \delta} \notin {\I}^{+}({\A}_{\alpha})$. But since ${c}_{0} \; {\subset}^{\ast} \; {b}_{f \restrict n}$, ${c}_{0} \cap {e}^{1 - \tau(\delta)}_{\tau \restrict \delta} \notin {\I}^{+}({\A}_{\alpha})$. Now, proceed exactly as in the proof of Lemma \ref{lem:existsperfecttree} to find $c \in {\[{c}_{0}\]}^{\omega}$ which is a.d.\ from everything in ${\A}_{\alpha}$ and with the property that $\forall \delta < \gamma \[c \; {\subset}^{\ast} \; {e}^{\tau(\delta)}_{\tau \restrict \delta} \]$ (in the present situation $\cf(\kappa) \neq \omega$; so it is obvious that $\gamma < \kappa$).

	Therefore, given $\{{b}_{n}: n \in \omega \} \subset {\I}^{+}({\A}_{\alpha})$, proceed as in the proof of Theorem \ref{thm:main} to find ${c}_{n} \in {\[{b}_{n}\]}^{\omega}$ and ${\tau}_{n} \in {2}^{< \kappa}$ so that each ${c}_{n}$ is a.d.\ from ${\A}_{\alpha}$, ${\tau}_{n} \neq {\tau}_{m}$ and ${c}_{n} \cap {c}_{m} = 0$ whenever $n \neq m$, and $\forall \delta < \dom{({\tau}_{n})}\[{c}_{n} \; {\subset}^{\ast} \; {e}^{\tau(\delta)}_{{\tau}_{n} \restrict \delta} \]$. Put $\vec{{E}_{0}}(n) = {c}_{n}$ and use Lemma \ref{lem:columnsplitting1} to define sequences $\langle {\sigma}_{s}: s \in {2}^{< \omega} \rangle \subset {2}^{< \kappa}$, $\{{\gamma}_{s}: s \in {2}^{< \omega}\} \subset \kappa$, $\langle {\vec{E}}_{s}: s \in {2}^{< \omega}\rangle$, and $\langle {\vec{C}}_{s}: s\in {2}^{< \omega}\rangle$ satisfying
\begin{enumerate}
	\item
		$\forall s \in {2}^{< \omega} \forall i \in 2 \[\dom{({\sigma}_{s})} = {\gamma}_{s} \wedge {\sigma}_{{s}^{\frown}{\langle i \rangle}} \supset {{\sigma}_{s}}^{\frown}{\langle i \rangle}\]$
	\item
		$\forall s \in {2}^{< \omega}\[{\vec{C}}_{s} \in {J}_{{\sigma}_{s}} \wedge {\vec{C}}_{s} \prec {\vec{E}}_{s}\]$
	\item
		$\forall s \in \omega \[\existsinf n \in \omega \[\lc {\vec{C}}_{s}(n) \cap {e}^{0}_{{\sigma}_{s}} \rc = \omega\] \wedge \existsinf n \in \omega \[\lc {\vec{C}}_{s}(n) \cap {e}^{1}_{{\sigma}_{s}} \rc = \omega\]\]$
	\item
		$\forall s \in {2}^{< \omega} \forall i \in 2 \forall n \in \omega \[{\vec{E}}_{{s}^{\frown}{\langle i \rangle}} (n) = {\vec{C}}_{s}({k}_{n}) \cap {e}^{i}_{{\sigma}_{s}} \]$, where $\langle {k}_{n}: n \in \omega \rangle$ is a strictly increasing enumeration of $\left\{n \in \omega: \lc {\vec{C}}_{s}(n) \cap {e}^{i}_{{\sigma}_{s}} \rc = \omega \right\}$.    
\end{enumerate}
There is $f \in {2}^{\omega}$ so that $\tau = {\bigcup}_{n \in \omega}{{\sigma}_{f \restrict n}} \notin \T$, where $\T = {\T}^{\alpha} \cup \{{\tau}_{n} \restrict \delta: n < \omega \wedge \delta \leq \dom{({\tau}_{n})}\}$. Applying Lemma \ref{lem:prec} (which is still true in the present context) to $\langle {\sigma}_{f \restrict n}: n \in \omega \rangle$, $\langle {\gamma}_{f \restrict n}: n \in \omega \rangle$, and $\langle {\vec{C}}_{f \restrict n}: n \in \omega \rangle$, find $\vec{E} \in {J}_{\tau}$ with $\vec{E} \prec {\vec{C}}_{0} \prec {\vec{E}}_{0}$. The rest of the verification is exactly as in the proof of Theorem \ref{thm:main}.
\end{proof}
\section{Some open questions} \label{sec:questions}
At one point in the proof of Lemma \ref{lem:existsperfecttree}, the possibility that $\cf{({\s}_{\omega, \omega})} = \omega$ had to be considered and treated somewhat differently. But we don't know if this case can actually occur. It is a well known open problem whether $\s$ can be singular, but it is easy to see that $\cf{(\s)} \neq \omega$. However, the argument for this doesn't seem to work for ${\s}_{\omega, \omega}$. Brendle~\cite{B} has used a template iteration to produce a model with $\cf{(\a)} = {\aleph}_{\omega}$. We don't know whether this can be modified to work for ${\s}_{\omega, \omega}$.   
\begin{Question} \label{q:cfsww}
Is it consistent that $\cf{({\s}_{\omega, \omega})} = \omega$?
\end{Question}
\begin{Question} \label{q:sww=s}
Does ${\s}_{\omega, \omega} = \s$?
\end{Question}
Of course, if the answer to Question \ref{q:cfsww} is ``yes'', then that would be a dramatic way to show the consistency of ${\s}_{\omega, \omega} \neq \s$. However, we suspect that the answer to Question \ref{q:sww=s} is actually ``yes''. If ${\s}_{\omega, \omega} \neq \s$, then, by Theorem \ref{thm:whenbisbig}, $\b \leq \s$. When $\s = \b$ and $P(\s)$ holds, note that the proof of Theorem \ref{thm:main2} is producing a tree of height $\s$ with the property that the sets along each cofinal branch behave like an ${\s}_{\omega, \omega}$ family, though they may not constitute such a family. At least in the case when $\s \leq \b < {\aleph}_{\omega}$, we are willing to conjecture that $\s = {\s}_{\omega, \omega}$.

	Shelah's construction works by comparing $\s$ with $\a$, while we have compared $\s$ with $\b$. We don't know if $\a$ can replace $\b$ in our construction, but we suspect not.
\begin{Question} \label{q:avsb}
	Suppose $\s \leq \a < {\aleph}_{\omega}$, then is there a weakly tight family?
\end{Question}
Though we have established the analogues of Shelah's cases 1 and 2 for weakly tight families, we have not been able to do this for his case 3. This would require showing that weakly tight families exist when $\b < \s$ provided that some suitable PCF type hypothesis holds, and would imply the existence of such families under $\c < {\aleph}_{\omega}$. But we doubt whether this can be done even when $\c = {\aleph}_{2}$.        
\begin{conj} \label{c:noweaklytight}
	There is a model of $ {\aleph}_{1} = \b < \s = {\aleph}_{2} = \c$ in which there are no weakly tight families.
\end{conj}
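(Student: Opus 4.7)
The natural strategy is a countable-support iteration $\langle {\P}_{\alpha}, {\dot{\Q}}_{\beta} : \alpha \leq {\omega}_{2}, \beta < {\omega}_{2} \rangle$ over a ground model of $\CH$, with each iterand proper and $\BS$-bounding so as to preserve $\b = {\aleph}_{1}$, and with cofinally many iterands adding splitting reals so that $\s = {\aleph}_{2}$ in the extension. Properness and $\CH$ in the ground model yield $\c = {\aleph}_{2}$. The remaining task is to design the ${\dot{\Q}}_{\alpha}$'s so that in $V[G_{{\omega}_{2}}]$ every a.d.\ family of size ${\aleph}_{2}$ fails to be weakly tight.

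First, I would fix a bookkeeping of ${\P}_{{\omega}_{2}}$-names that, via the usual reflection argument for proper forcing, enumerates every potential weakly tight family in the final extension. Second, at stage $\alpha$, given a nice name ${\dot{\A}}_{\alpha}$ for such a candidate, I would take ${\dot{\Q}}_{\alpha}$ to add a sequence $\langle {\dot{b}}^{\alpha}_{n} : n \in \omega\rangle \subset {\I}^{+}({\dot{\A}}_{\alpha})$ that witnesses the failure of Definition \ref{def:weakly} for ${\dot{\A}}_{\alpha}$; that is, $\forall a \in {\dot{\A}}_{\alpha} \, \forallbutfin n \in \omega \, [\lc a \cap {\dot{b}}^{\alpha}_{n} \rc < \omega]$. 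The plausible candidate for ${\dot{\Q}}_{\alpha}$ is a variant of Mathias forcing relative to the coideal ${\I}^{+}({\dot{\A}}_{\alpha})$, modified to produce a countable pairwise-disjoint sequence of positive sets rather than a single real, and engineered to be proper and $\BS$-bounding. Third, I would need a preservation theorem certifying that subsequent $\BS$-bounding iteration cannot reintroduce a subfamily of ${\dot{\A}}_{\alpha}$ whose infinitely many members meet infinitely many of the ${\dot{b}}^{\alpha}_{n}$'s in an infinite set, so that the kill at stage $\alpha$ survives to $V[G_{{\omega}_{2}}]$.

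The principal obstacle is the intrinsic conflict between preserving $\b = {\aleph}_{1}$ and killing \emph{all} weakly tight families. Theorem \ref{thm:main} shows that a small value of $\b$ is precisely the resource used to construct weakly tight families via the column combinatorics of Definition \ref{def:Ieta}, and an $\BS$-bounding tail of the iteration preserves exactly the sort of almost-inclusion witnesses that keep such a family weakly tight. Thus in every intermediate extension one needs ${\s}_{\omega, \omega} > \b = {\aleph}_{1}$, even though the iteration must eventually drive $\s$ up to ${\aleph}_{2}$; combining this with $\BS$-boundedness of every iterand seems to demand iterands that simultaneously destroy the splitting property of each previously-added $\aleph_{1}$-sized splitting family while adding fresh splitting reals of their own, a delicate and highly non-standard requirement. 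I expect that no plain countable-support iteration of tree forcings will suffice, and that a parameterized template iteration in the spirit of Brendle and Shelah, or else an entirely new Suslin ccc iterand tailored to the columns $\vec{C}$ of Definition \ref{def:Ieta}, will be the crucial missing ingredient. That the authors state this as a conjecture rather than a theorem is consistent with the difficulty, and any actual proof would, I believe, stall precisely here.
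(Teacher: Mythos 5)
The paper states this as a \emph{conjecture} and provides no proof; the problem remains open. You recognized this and, rather than claiming a proof, offered a forcing strategy together with an honest admission of where it stalls. There is therefore no ``paper's proof'' to compare against; what follows is an assessment of the sketch itself.

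The sketch has a fundamental flaw that is independent of the weak-tightness combinatorics: the proposed iteration cannot produce a model with $\s = \aleph_2$ at all. A countable-support iteration of proper $\BS$-bounding iterands over a $\CH$ ground model keeps the ground-model reals a dominating family, so $\d = \aleph_1$ in the final extension. Since $\s \le \d$ in $\ZFC$, this forces $\s = \aleph_1$, contradicting your target. In other words, $\BS$-bounding is far too strong a preservation hypothesis: it holds down not just $\b$ but also $\d$, and keeping $\d$ small caps $\s$. Any consistency proof of $\b < \s$ must add unbounded reals while preserving a witness to $\b = \aleph_1$ by subtler means, which is exactly why Shelah's original $\b < \s$ model~\cite{b<s} and Brendle's $\b < \a = \s$ model~\cite{mob2} use specialized machinery rather than a plain $\BS$-bounding countable-support iteration, and why the paper's discussion immediately following Conjecture~\ref{c:noweaklytight} is framed around those two models. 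Your framework would therefore need to be replaced wholesale before the later steps (bookkeeping candidate families, inserting a proper Mathias-style kill poset, proving a preservation theorem) could even be attempted. Those later steps also have their own difficulties---Mathias forcing relative to ${\I}^{+}(\A)$ is $\BS$-bounding only for rather special coideals, and the conjecture requires killing weakly tight families of size $\aleph_1$ as well as $\aleph_2$---but the incompatibility of an $\BS$-bounding iteration with $\s = \aleph_2$ is where the plan breaks first.
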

Shelah~\cite{b<s} first established the consistency of $\b < \s$. The method is flexible enough to prove the consistency of both $\a = \b < \s$ and $\b < \a = \s$. The method for proving the consistency of $\a = \b < \s$ can be modified to produce a model of $\b < \s$ where a weakly tight family exists. Assuming $\CH$ in the ground model, it is possible to construct a weakly tight family whose weak tightness is not destroyed by the relevant iteration. However, this weakly tight family will not have size $\c$, and we don't know if there are any of size $\s$ in this model. Later, Brendle~\cite{mob2} found a way to prove the consistency of $\b < \a = \s$ via a c.c.c.\ iteration. We do not know whether weakly tight families exist in either Shelah's or Brendle's model for $\b < \a = \s$.   
\begin{conj} \label{c:sacks}
If $\s \leq \b < {\aleph}_{\omega}$, then there is a Sacks indestructible MAD family.
\end{conj}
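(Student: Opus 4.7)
The plan is to refine the tree construction of Theorems \ref{thm:main} and \ref{thm:main2} so that the MAD family $\A$ produced satisfies a Brendle--Yatabe style combinatorial characterization of Sacks indestructibility. In that characterization, rather than a countable enumeration of positive sets (as for weak tightness), one is given a monotone tree $\{{b}_{s} : s \in {2}^{< \omega}\} \subseteq {\I}^{+}(\A)$ with ${b}_{{s}^{\frown}{\langle i \rangle}} \subseteq {b}_{s}$, and one must find $a \in \A$ catching the tree along a perfect set of branches, meaning that there is a perfect $T \subseteq {2}^{< \omega}$ with $\lc a \cap {b}_{s} \rc = \omega$ for every $s \in T$. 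Thus the basic modification is to replace the countable family $\{{b}_{n} : n \in \omega\}$ processed at stage $\alpha < \c$ by such a tree, and to carry out the fusion of Lemma \ref{lem:prec} along a perfect set of branches rather than along $\omega$.

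Concretely, at stage $\alpha < \c$, for each $s \in {2}^{< \omega}$ one would apply Lemma \ref{lem:existsperfecttree} with $b = {b}_{s}$ to extract ${c}_{s} \in {[{b}_{s}]}^{\omega}$ which is a.d.\ from ${\A}_{\alpha}$, together with a node ${\tau}_{s} \notin \T$ witnessing the splitting property $({\ast}_{1})$, arranged coherently so that ${\tau}_{{s}^{\frown}{\langle i \rangle}}$ properly extends ${\tau}_{s}$ in a tree fashion. One then iterates Lemma \ref{lem:Ietasplitting} along both the $s$-tree and an auxiliary splitting tree in ${2}^{< \kappa}$, producing a two-parameter family of sequences of columns ${\vec{C}}_{s,t}$ and matching nodes ${\sigma}_{s,t} \in {2}^{< \kappa}$. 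A Lemma \ref{lem:prec}-style fusion run along a perfect set of coherent branches $f \in {2}^{\omega}$ then yields ${\vec{D}}^{\alpha} \in {\I}_{\tau}$ for some $\tau = \eta({a}_{\alpha}) \notin \T$, with the property that, for a perfect set of branches $g$ of the $s$-tree, $\lc {a}_{\alpha} \cap {b}_{g \restrict k} \rc = \omega$ for infinitely many $k$. The set $\eta(\vec{D}^{\alpha}(n))$ is assigned along the way exactly as in Theorem \ref{thm:main}, and the subtree ${\T}_{\alpha}$ is enlarged analogously to $({\ast}_{4})$.

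The main obstacle is the analogue of Lemma \ref{lem:Ietasplitting} in the perfect-tree setting. There, a single bounding function $f \in \BS$ must dominate both the $\gamma$-many coordinate functions ${f}_{\delta}$ and, in the perfect-tree setting, potentially ${2}^{\omega}$-many branch-indexed catching functions, which is far too many for a bare $\b$-assumption. To sidestep this, I would first choose $\eta({a}_{\alpha})$ of height $\gamma < \kappa$ by applying Lemma \ref{lem:Ietasplitting} to a single diagonal sequence of columns extracted from the perfect tree, and only then perform the $\b$-bounding on the $\gamma$-many ${f}_{\delta}$. The delicate point, and the place where $\b < {\aleph}_{\omega}$ likely enters via a PCF-type hypothesis strictly stronger than $P(\b)$, is to verify that after this trim a perfect subtree of $\{{b}_{s}\}$ still survives on which ${\vec{D}}^{\alpha}$ catches infinitely often. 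This appears to require a simultaneous diagonalisation of a tree of branch-indexed catching functions alongside the coordinate functions, and is the new combinatorial heart of the problem. Once that fusion step is negotiated, the remaining bookkeeping --- defining $\eta({\vec{D}}^{\alpha}(n))$, verifying $({\dagger}_{{a}_{\alpha}})$ and $({\dagger}_{{\vec{D}}^{\alpha}(n)})$, and the pairwise a.d.\ verification --- goes through essentially verbatim as in the proof of Theorem \ref{thm:main}.
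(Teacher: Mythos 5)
The statement you have attempted to prove is Conjecture~\ref{c:sacks}, and the paper does not prove it; it is explicitly left open. The paragraph immediately following the conjecture in the paper identifies the obstruction: the authors say they ``may assume that $\a = \c$'' and that the difficulty ``seems to be in finding the right definition of ${\I}_{\eta}$'' that would allow ``a fusion argument along a branch of cofinality $\omega$,'' i.e.\ an analogue of Lemma~\ref{lem:prec} for whatever combinatorial characterization of Sacks indestructibility one adopts. So there is no ``paper's own proof'' against which your attempt can be compared.

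Your proposal does not close that gap; you in fact name it yourself. After setting up the Brendle--Yatabe-style tree $\{b_s : s \in {2}^{< \omega}\}$ and trying to mimic the two-level fusion of Theorem~\ref{thm:main}, you observe that one would have to dominate ${2}^{\omega}$-many branch-indexed functions with a single $f \in \BS$, which $\b$ alone cannot do, and you then write that ``the delicate point \ldots\ is to verify that after this trim a perfect subtree of $\{b_s\}$ still survives'' and that this ``is the new combinatorial heart of the problem.'' That is precisely the missing content: the analogue of Lemma~\ref{lem:Ietasplitting} and of the fusion in Lemma~\ref{lem:prec} for a perfect set of branches is asserted but not produced, and no candidate replacement for ${\I}_{\eta}$ is proposed. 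Invoking an unspecified ``PCF-type hypothesis strictly stronger than $P(\b)$'' does not repair this, since the conjecture is stated under the bare hypothesis $\s \le \b < {\aleph}_{\omega}$ and a stronger hypothesis would at best prove a weaker statement. There is also a more basic mismatch to be addressed before the machinery can even be engaged: you must first fix and correctly state the Brendle--Yatabe characterization of Sacks indestructibility for a.d.\ families, and then show that the tree construction realizes exactly that characterization; ``catching along a perfect set of branches'' is stated loosely and its precise formulation matters for whether a Lemma~\ref{lem:prec}-style diagonal argument has any chance of working. As written, the proposal is an outline of an approach that flags, but does not resolve, the same difficulty the paper flags, so it does not constitute a proof of the conjecture.
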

As mentioned in Section \ref{sec:intro}, we may assume that $\a = \c$ for proving Conjecture \ref{c:sacks}. The difficulty seems to be in finding the right definition of ${\I}_{\eta}$. We need a definition of ${\I}_{\eta}$ which will allow us to do a fusion argument along a branch of cofinality $\omega$, and hence get the analogue of Lemma \ref{lem:prec}.
\bibliographystyle{amsplain}
\bibliography{Bibliography}
\end{document}